\numberwithin{equation}{section}
\newcommand{\vertiii}[1]{{\left\vert\kern-0.25ex\left\vert\kern-0.25ex\left\vert #1 
    \right\vert\kern-0.25ex\right\vert\kern-0.25ex\right\vert}}
\newcommand{\Bf}[0]{\mathcal{B}}
\newcommand{\Sf}[0]{\mathcal{S}_h}
\newcommand{\Lf}[0]{\mathcal{L}}
\newcommand{\clev}[0]{I_h v}
\newcommand{\Ch}[0]{\mathcal{C}_h}
\newcommand{\Eh}[0]{\mathcal{E}_h}
\newcommand{\Gh}[0]{\mathcal{G}_h}
\newcommand{\osc}[0]{\mathrm{osc}}
\newcommand{\norm}[1]{\vertiii{#1}}
\newcommand{\ju}[1]{\left\llbracket #1 \right\rrbracket}
\newcommand{\me}[1]{\left\{\hspace{-0.2cm}\left\{ #1 \right\}\hspace{-0.2cm}\right\}}
\begin{document}

\title{Error analysis of Nitsche's mortar method}
\author{Tom Gustafsson \and Rolf Stenberg \and Juha Videman}
\institute{T.~Gustafsson \and R.~Stenberg
\at Department of Mathematics and Systems Analysis, Aalto University, 00076 Aalto, Finland\\
\email{tom.gustafsson@aalto.fi},
\email{rolf.stenberg@aalto.fi}
\and
J.~Videman
\at CAMGSD/Departamento de Matem\'atica, Instituto Superior T\'ecnico, Universidade de Lisboa, 1049-001 Lisbon, Portugal\\
\email{jvideman@math.tecnico.ulisboa.pt}}

\maketitle
\begin{abstract}
    Optimal a priori and a posteriori error estimates are derived for three
    variants of Nitsche's mortar finite elements. The analysis is
    based on the equivalence of Nitsche's method and the stabilised mixed
    method.  Nitsche's method is defined so that it is robust with respect to
    large jumps in the material and mesh parameters over the interface.
    Numerical results demonstrate the robustness of the a posteriori error
    estimators.
\end{abstract}
\keywords{Nitsche's method, domain decomposition, stabilised finite elements}

\section{Introduction}
Nitsche's method~\cite{Nitsche} is by now a well-established and successful
method, e.g., for domain decomposition \cite{Stenberg1998,BHS,heinrich2003nitsche}, elastic contact
problems \cite{French-survey, MR3342215, MR3342214, burman2017penalty,
chouly2015symmetric}, and as a fictitious domain method
\cite{Burman-Hansbo2012, burman_hansbo_2014, MR3633544}.
However, its
mathematical analysis has not, as yet, been entirely satisfactory. In fact, for
an elliptic problem with a variational formulation in $H^1$, the existing a
priori estimates require that the solution is in $H^s$, with $s>3/2$; cf.~\cite{BHS, chouly2015symmetric}. Moreover,
the a posteriori analysis has been based on a non-rigorous saturation
assumption; cf.~\cite{BHS, chouly2017residual}.



In our paper \cite{Stenberg1995}, we made the observation that there is a close
connection between the Nitsche's method for Dirichlet conditions and a certain
stabilised mixed finite element method, and we advocated the use of the former
since it has the advantage that it directly yields a method with an optimally
conditioned, symmetric, and positive-definite stiffness matrix. The a priori
error analysis is also very straightforward but, as understood from above, not
optimal.

The purpose of the present paper is to show that this connection can be used to
improve the error analysis of the domain decomposition problem, i.e.~we will
derive optimal error estimates, both a priori and a posteriori.
We consider three similar but distinct Nitsche's mortar methods. Two of the methods have
 appeared  previously in the literature~\cite{JS2012, JuntunenEnumath,
Juntunen2015}  and the third one is a simpler master-slave formulation where
the stabilisation term is present only on the slave side of the interface. The methods
are designed so that they are robust with respect to large jumps in the material
and mesh parameters over the interface. The robustness is achieved by a proper
scaling of the stabilisation/Nitsche terms; cf.~\cite{Stenberg1998, Juntunen2015}.


The plan of the paper is the following. In the next section, we present the
model transmission problem, rewrite it in a mixed saddle point variational form
and prove its stability in appropriately chosen continuous norms.  In Section 3,
we present three different stabilised mixed finite element methods and their
respective Nitsche formulations.  In Section 4, we prove the stability of the
discrete saddle point formulations and derive optimal a priori error estimates.
In Section 5, we perform the a posteriori error analysis 
and show that the residual estimators are both reliable and efficient.
In Section 6, we report the results of our numerical computations.


\section{The model problem}

Suppose that the polygonal or polyhedral domain $\Omega \subset \mathbb{R}^d$, $d\in\{2,3\}$, is divided into two non-overlapping parts
$\Omega_i$, $i=1,2$, and denote their common boundary by $\Gamma =
\partial \Omega_1 \cap \partial \Omega_2$. We assume that $\partial \Gamma\subset \partial \Omega$, with $\partial \Gamma$ being the boundary of the $n-1$ dimensional manifold $\Gamma.$

We consider the problem: find 
functions $u_i$ that satisfy
\begin{equation}\label{strong} 
\begin{aligned}
    -\nabla \cdot k_i \nabla u_i &= f \quad && \text{in $\Omega_i$,} \\
    u_1 - u_2 &= 0 && \text{on $\Gamma$,} \\
    k_1 \frac{\partial u_1}{\partial n_1} + k_2 \frac{\partial u_2}{\partial n_2} &= 0 && \text{on $\Gamma$}, \\
    u_i &= 0 && \text{on $\partial \Omega_i \setminus \Gamma$,}
\end{aligned}
\end{equation}
where $k_i > 0,\  i=1,2,$ are  material parameters, $f \in L^2(\Omega)$ is a load
function and $n_i$ denote the outer normal vectors to the subdomains $\Omega_i, i=1,2$. 
In what follows we often write  $n = n_1 = -n_2$. Throughout the paper we assume that $k_1\geq k_2$.

The standard variational formulation of problem \eqref{strong} reads as follows: find $u\in H^1(\Omega)$ such that
 \begin{equation}
(k \nabla u, \nabla v)_\Omega=(f,v) _\Omega \quad \forall v\in H^1(\Omega),
\end{equation}
where  $k\vert_ {\Omega_i}=k_i$ and $u\vert_ {\overline{\Omega}_i}\, =u_i$.
 On the interface $\Gamma$, the restriction of the solution $u$ lies in the Lions--Magenes space $ H^{\frac12}_{00}(\Gamma)$
  (c.f.  \cite[Theorem 11.7, page 66]{LM} or \cite[Chapter 33]{Tartar}), with its intrinsic norm defined as
  \begin{equation}
      \|v\|_{\frac12,\Gamma}^2 =   \|v\|_{0,\Gamma}^2 +\int_\Gamma \int_\Gamma \frac{\vert  v(x)-v(y)   \vert ^2}{\vert  x-y  \vert ^d}\,\mathrm{d}x\,\mathrm{d}y   + 
  \int_\Gamma \frac{  v(x)   ^2}{ \rho( x )  }\,\mathrm{d}x,
   \end{equation}
 where $\rho(x)$ is the distance from $x$ to the boundary $\partial \Gamma$.   
 
 The mixed formulation follows from imposing the continuity condition on $ \Gamma$ in a weak form by  using the normal flux as the Lagrange multiplier, viz.
$$ \lambda =  k_1 \frac{\partial u_1}{\partial n} = -k_2 \frac{\partial u_2}{\partial n}. $$
 The Lagrange multiplier belongs to the dual space 
 $Q = \big(H^{\frac12}_{00}(\Gamma)\big)^\prime$,
 equipped with the norm
\begin{equation}
    \| \xi \|_{-\frac12,\Gamma}~= \sup_{v \in H^{\frac12}_{00}(\Gamma)} \frac{\langle v, \xi \rangle}{\|v\|_{\frac12,\Gamma}},
\end{equation}
 where $\langle \cdot, \cdot \rangle : Q^\prime \times Q \rightarrow \mathbb{R}$
stands for the duality pairing.

 Let
\begin{equation}
    V_i = \{ v \in H^1(\Omega_i) : v|_{\partial \Omega_i \setminus \Gamma} = 0 \}, \quad V = V_1 \times V_2,
\end{equation}
and define the bilinear and linear forms, $\Bf : (V \times Q) \times (V \times Q) \rightarrow \mathbb{R}$ and $\Lf : V  \rightarrow \mathbb{R}$ by
\begin{align}
    \Bf(w,\xi;v,\mu) &= \sum_{i = 1}^2 (k_i \nabla w_i, \nabla v_i)_{\Omega_i} - \langle \ju{w}, \mu \rangle - \langle \ju{v}, \xi \rangle, \\
    \Lf(v) &= \sum_{i = 1}^2 (f,v_i)_{\Omega_i},
\end{align}
where $w$ and $v$ denote the pair of functions $w = (w_1, w_2) \in V_1
\times V_2$ and $v = (v_1, v_2) \in V_1 \times V_2$. Furthermore, $\ju{w}|_\Gamma = (w_1 - w_2)|_\Gamma$ 
denotes the jump in the value of $w$ over $\Gamma$.
The mixed variational formulation of \eqref{strong}
reads as follows: find $(u, \lambda) \in V \times Q$ such that
\begin{equation}
    \label{contprob}
    \Bf(u,\lambda; v, \mu) = \Lf(v) \quad \forall (v, \mu) \in V \times Q.
\end{equation}
The norm in $V\times Q$ used in the analysis is scaled by the material parameters, viz. 

\begin{equation}
    \label{energynorm}
    \norm{(w,\xi)}^2 = \sum_{i = 1}^2\Big( k_i \|\nabla w_i\|_{0,\Omega_i}^2 + 
    \frac{1}{k_i}\|\xi\|_{-\frac12,\Gamma}^2\Big).
\end{equation}

 \begin{theorem}[Continuous stability]
    \label{contstab}
   For every $(w,\xi) \in V\times Q$ there exists $(v,\mu) \in V \times Q$ such that
    \begin{equation}
        \label{eq:discstab1}
        \Bf(w,\xi; v, \mu) \gtrsim \norm{(w,\xi)}^2
    \end{equation}
    and
    \begin{equation}
        \label{eq:discstab2}
        \norm{(v,\mu)} \lesssim \norm{(w,\xi)}.
    \end{equation}
\end{theorem}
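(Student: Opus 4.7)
The plan is to apply the standard saddle-point strategy: combine the obvious test $(v,\mu)=(w,-\xi)$ (which kills the cross terms but only controls the $H^1$ semi-norm of $w$) with a Fortin-type lifting test function that recovers control of $\|\xi\|_{-1/2,\Gamma}$. The main technical point will be to arrange the lifting and the Young's-inequality parameter so that the constants are independent of the ratio $k_1/k_2$.

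First I would check that
\[
\Bf(w,\xi;w,-\xi)=\sum_{i=1}^{2}k_i\|\nabla w_i\|_{0,\Omega_i}^2,
\]
so that the $H^1$ block of the energy norm is immediately under control. Next, given $\xi\in Q$, I would use the Lions--Magenes theory to produce a Riesz representative $\phi_\xi\in H^{1/2}_{00}(\Gamma)$ satisfying $\langle\phi_\xi,\xi\rangle=\|\xi\|_{-1/2,\Gamma}^{2}$ and $\|\phi_\xi\|_{1/2,\Gamma}=\|\xi\|_{-1/2,\Gamma}$, and extend it by a bounded lifting $E\phi_\xi\in H^{1}(\Omega_2)$ with vanishing trace on $\partial\Omega_2\setminus\Gamma$. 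Crucially, I would place the lifting in $\Omega_2$ (the subdomain with the \emph{smaller} coefficient, since $k_1\ge k_2$), and set $v^\xi=(0,E\phi_\xi)\in V$, so that $\ju{v^\xi}=-\phi_\xi$ and
\[
-\langle\ju{v^\xi},\xi\rangle=\|\xi\|_{-\frac12,\Gamma}^2,\qquad
k_2\|\nabla E\phi_\xi\|_{0,\Omega_2}^{2}\lesssim k_2\|\xi\|_{-\frac12,\Gamma}^{2}.
\]

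Then I would take the combined test function
\[
(v,\mu)=\bigl(w+\delta v^{\xi},\;-\xi\bigr),\qquad \delta>0\text{ to be chosen},
\]
and compute
\[
\Bf(w,\xi;v,\mu)=\sum_{i=1}^{2}k_i\|\nabla w_i\|_{0,\Omega_i}^{2}
+\delta k_2(\nabla w_2,\nabla E\phi_\xi)_{\Omega_2}+\delta\|\xi\|_{-\frac12,\Gamma}^{2}.
\]
The cross term is bounded by Young's inequality in the $k$-weighted form
\[
\delta k_2|(\nabla w_2,\nabla E\phi_\xi)_{\Omega_2}|
\le \tfrac12 k_2\|\nabla w_2\|_{0,\Omega_2}^{2}+C\delta^{2}k_2\|\xi\|_{-\frac12,\Gamma}^{2}.
\]
Choosing $\delta=c/k_2$ with $c$ small enough, the second term is absorbed to leave $\gtrsim k_2^{-1}\|\xi\|_{-1/2,\Gamma}^{2}$, and because $k_1^{-1}\le k_2^{-1}$, this dominates the full multiplier part of $\norm{(w,\xi)}^{2}$. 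This yields \eqref{eq:discstab1}.

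For the continuity bound \eqref{eq:discstab2}, $\mu=-\xi$ is immediate, and
\[
\|v\|_{V}^{2}\lesssim\sum_{i=1}^{2}k_i\|\nabla w_i\|_{0,\Omega_i}^{2}+\delta^{2}k_2\|\xi\|_{-\frac12,\Gamma}^{2}
\lesssim \sum_{i=1}^{2}k_i\|\nabla w_i\|_{0,\Omega_i}^{2}+k_2^{-1}\|\xi\|_{-\frac12,\Gamma}^{2},
\]
again using $\delta=c/k_2$, which gives \eqref{eq:discstab2}. The main obstacle, and the whole reason for the specific scaling in \eqref{energynorm}, is precisely this $k$-robustness: the lifting must be built on the subdomain with smaller diffusion, and the penalty parameter $\delta$ in the Young-inequality absorption must scale inversely with $k_2$, so that the constants hidden in $\gtrsim$ and $\lesssim$ do not depend on $k_1/k_2$.
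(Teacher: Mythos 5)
Your proof is correct and is essentially the paper's argument with the Babu\v{s}ka--Brezzi machinery unpacked: the paper states the subdomain inf-sup conditions (equivalently, the existence of your bounded lifting $E\phi_\xi$ of the Riesz representative) and then cites the abstract theory, whose standard constructive proof is precisely your test function $(w+\delta v^\xi,\,-\xi)$ with $\delta\sim k_2^{-1}$ and the Young-inequality absorption. The only cosmetic difference is that you lift only into $\Omega_2$ and invoke $k_1\ge k_2$ to recover the full multiplier weight $k_1^{-1}+k_2^{-1}$, whereas the paper's combined inf-sup condition \eqref{continfsup} uses the liftings in both subdomains.
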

\begin{proof}
In both subdomains, we have the inf-sup condition (cf. \cite{Babuska1973}) 
\begin{equation}
\sup_{v_i\in V_i} \frac{\langle v_i, \xi\rangle}{\Vert \nabla v_i\Vert_{0,\Omega_i}}
\geq C_i \Vert   \xi   \Vert _{-\frac12, \Gamma} \quad \forall \xi\in Q , \quad i=1,2.
\end{equation}
Therefore  \begin{equation}\label{continfsup}
\sup_{v=(v_1,v_2)\in V} \frac{\langle \ju{v}, \xi\rangle}{(\sum_{i = 1}^2 k_i \|\nabla v_i\|_{0,\Omega_i}^2)^{1/2}}
\geq C   \left(\frac{1}{k_1} + \frac{1}{k_2} \right)^{1/2} \|\xi\|_{-\frac12,\Gamma} \quad \forall \xi\in Q.
\end{equation}
The stability follows now from the Babu{\v{s}}ka--Brezzi theory \cite{Babuska1973}.
\qed
\end{proof}

\begin{remark}
Given that   $k_1\geq k_2$, it holds with some constants $C_1,C_2>0$  that
 \begin{equation}
     \label{normeq}
 C_1   \norm{(w,\xi)}^2 \leq  \sum_{i = 1}^2 k_i \|\nabla w_i\|_{0,\Omega_i}^2 + 
 \frac{1}{k_2} \|\xi\|_{-\frac12,\Gamma}^2\leq C_2   \norm{(w,\xi)}^2 .
\end{equation}
This defines a norm that will be used in the following for defining and analysing a "master-slave" formulation.
 \end{remark}

\section{The finite element methods}

We start by defining the stabilised mixed method. 
The subdomains $\Omega_i$ are divided into sets of non-overlapping simplices $\Ch^i$, $i
= 1,2$, with $h$ referring to the mesh parameter. The edges/facets of the elements in $\Ch^i$ are divided into two meshes: $\Eh^i$ consisting of those which are located in the interior of $\Omega_i$,  and  $\Gh^i$ of those that lie on $\Gamma$. 
Furthermore, by $\Gh^\cap$ we denote the boundary mesh
obtained by intersecting the edges/facets of $\Gh^1$ and $\Gh^2$.
In particular, each $E \in \Gh^\cap$ corresponds to a pair $(E_1, E_2) \in \Gh^1
\times \Gh^2$ such that $E = E_1 \cap E_2$.
In the subdomains, we define the finite element subspaces
\begin{equation}
    V_{i,h}  = \{ v_{i,h} \in V_i : v_{i,h}|_K \in P_{p}(K)~\forall K \in \Ch^i\}, \quad V_h = V_{1,h} \times V_{2,h}, \\
\end{equation}
where $p\geq 1$. 
The finite element space for the dual variable consists of discontinuous piecewise polynomials, also of degree $p$, defined at the intersection mesh 
$\Gh^\cap$:
\begin{equation}
        Q_h = \{ \mu_h \in Q : \mu_h|_E\,\in P_{p}(E)~\forall E \in \Gh^\cap\}.
        \label{qhspace}
\end{equation}
 
We will now introduce three slightly different stabilised finite element methods and the corresponding Nitsche's formulations for problem \eqref{strong}.

\subsection{Method I}
 
We define a bilinear form $\Bf_h : (V_h \times Q_h) \times (V_h \times Q_h) \rightarrow \mathbb{R}$  through
\begin{align}
    \Bf_h(w,\xi;v,\mu) &= \Bf(w,\xi;v,\mu) - \alpha \Sf(w,\xi;v,\mu),
\end{align}
where $\alpha > 0$ is a stabilisation parameter and
\begin{equation}
\label{stabterm}
    \Sf(w,\xi;v,\mu) = \sum_{i = 1}^2 \sum_{E \in \Gh^i} \frac{h_E}{k_i} \left(\xi - k_i \frac{\partial w_i}{\partial n}, \mu - k_i \frac{\partial v_i}{\partial n} \right)_E,
\end{equation}
a stabilising term, with $h_E$ denoting the diameter of $E \in \Gh^i$.
The first stabilised finite element
method is  written as: find $(u_h, \lambda_h) \in V_h \times Q_h$ such that
\begin{equation}
    \label{discprob}
    \Bf_h(u_h,\lambda_h; v_h, \mu_h) = \Lf(v_h) \quad \forall (v_h, \mu_h) \in V_h \times Q_h.
\end{equation}

Note that testing with  $(0,\mu_h) \in  V_h\times Q_h $ in \eqref{discprob} yields
the equation
\begin{equation}\label{discmu}
\langle  \ju{u_h}, \mu_h \rangle +\alpha   \sum_{i = 1}^2 \sum_{E \in \Gh^i} \frac{h_E}{k_i} \left(\lambda_h - k_i \frac{\partial v_{i,h}}{\partial n}, \mu_h \right)_E  = 0   \quad \forall \mu_h\in Q_h. 
\end{equation}
Hence,  denoting by $h_i : \Gamma \rightarrow \mathbb{R}$, $i=1,2$, a local mesh
size function such that
\begin{equation}
    \quad h_i|_E = h_E \quad \forall E \in \Gh^i, \quad i = 1,2,
\end{equation}
equation \eqref{discmu} can be written as
\begin{equation}\label{discmu-II}
    \left(  \ju{u_h} +\alpha   \sum_{i = 1}^2  \frac{h_i}{k_i}\left(\lambda_h - k_i \frac{\partial u_{i,h}}{\partial n}\right), \mu_h \right)_\Gamma = 0   \quad \forall \mu_h\in Q_h. 
\end{equation}
Now, since each $ E \in \Gh^\cap$ is an intersection of a pair  $(E_1, E_2) \in \Gh^1
\times \Gh^2$ and the polynomial degree is $p$ for all variables, we obtain the 
following expression for the discrete
Lagrange multiplier
\begin{equation}\label{lamb}
    \lambda_h = \me{k \frac{\partial u_h}{\partial n}} - \beta \ju{u_h},
\end{equation}
where
\begin{equation}\label{betaterm}
 \beta = \frac{\alpha^{-1} k_1 k_2}{k_2 h_1 + k_1 h_2},
\end{equation}
and
\begin{equation}\label{convexcomb}
    \me{k \frac{\partial w}{\partial n}} = \frac{k_2 h_1}{k_2 h_1 + k_1 h_2} k_1 \frac{\partial w_1}{\partial n} + \frac{k_1 h_2}{k_2 h_1 + k_1 h_2} k_2 \frac{\partial w_2}{\partial n}.\end{equation}

Substituting expression \eqref{lamb} into the discrete variational formulation leads to  the \emph{Nitsche formulation}: find $u_h \in V_h$ such that
\begin{equation}\label{nitsche}
    a_h(u_h, v_h) = \Lf(v_h) \quad \forall v_h \in V_h,
\end{equation}
where the bilinear form $a_h$ is defined through
\begin{equation}
    a_h(w,v) =\sum_{i=1}^2 (k_i \nabla w_i, \nabla v_i)_{\Omega_i} + b_h(w,v), \end{equation}
with
\begin{equation}
\begin{aligned}
    b_h(w,v) &= \sum_{E \in \Gh^\cap} \Bigg\{ (\beta \ju{w},\ju{v})_E -\left(\gamma \ju{k\frac{\partial w}{\partial n}}, \ju{k \frac{\partial v}{\partial n}} \right)_E  \\
             &\qquad\qquad -\left(\me{k \frac{\partial w}{\partial n}}, \ju{v}\right)_E - \left(\ju{w}, \me{k \frac{\partial v}{\partial n}}\right)_E  \Bigg\},
\end{aligned}
\end{equation}
and the jump term and the function $\gamma$ are given by
\begin{equation}
    \ju{k \frac{\partial w}{\partial n}} = k_1 \frac{\partial w_1}{\partial n} - k_2 \frac{\partial w_2}{\partial n}, \quad \gamma = \frac{\alpha h_1 h_2}{k_2 h_1 + k_1 h_2}.
\end{equation}
Note that \eqref{convexcomb}  is a convex combination of two fluxes as in the
method suggested in \cite{Stenberg1998}.
The formulation \eqref{nitsche} corresponds to the method introduced in
\cite{JS2012}, and to the second method proposed for problem \eqref{strong} in
\cite[pp.~468--470]{Juntunen2015}.

\subsection{Method II: Master-slave formulation}

Assume that $k_1 \gg k_2$. The norm equivalence \eqref{normeq} suggests using   only the term from the "less rigid" subdomain $\Omega_2$ for stabilisation in \eqref{stabterm}. 
Calling $\Omega_1$ the master domain and $\Omega_2$ the slave domain
 and stabilising from the slave side only, yields a mixed stabilised finite element as in \eqref{discprob} except that 
\begin{equation}
    \Sf(w,\xi;v,\mu) =  \sum_{E \in \Gh^2} \frac{h_E}{k_2} \left(\xi - k_2\frac{\partial w_2}{\partial n}, \mu - k_2\frac{\partial v_2}{\partial n} \right)_E.
    \end{equation}
Note that the space for the Lagrange multiplier is still defined by \eqref{qhspace}.

The corresponding Nitsche's formulation reads as in \eqref{nitsche} with the bilinear form $b_h$  defined simply as
\begin{equation}
\begin{aligned}
    b_h(w,v)  = \sum_{E \in \Gh^\cap} \left\{\left(\frac{k_2}{\alpha h_2} \ju{w},\ju{v}\right)_E 
     -\left(k_2 \frac{\partial w}{\partial n}, \ju{v}\right)_E - \left(\ju{w}, k_2 \frac{\partial v}{\partial n}\right)_E  \right\}.
\end{aligned}
\end{equation}

\subsection{Method III: Stabilisation using a convex combination of fluxes~\cite{Stenberg1998,BHS,Juntunen2015,JuntunenEnumath}}

Let us reformulate \eqref{discprob} by considering the stabilising term 
\begin{equation}
    \alpha  \Sf(w,\xi;v,\mu) =  \left(\beta^{-1} \left(\xi - \me{k \frac{\partial w}{\partial n}} \right), \mu -\me{k \frac{\partial v}{\partial n}}  \right)_\Gamma,
\end{equation}
where $\left\{\hspace{-0.15cm}\left\{k \frac{\partial w}{\partial n}\right\}\hspace{-0.15cm}\right\}$ denotes the convex combination \eqref{convexcomb}  and $\beta$ is defined by \eqref{betaterm}.
To derive the corresponding Nitsche's method, we proceed as above and obtain an equivalent expression for the discrete Lagrange multiplier:
\begin{equation}
    \lambda_h = \me{k \frac{\partial u_h}{\partial n}} - \beta \ju{u_h}.
\end{equation}
Substituting this back to the stabilised formulation  leads to the method  \eqref{nitsche}  with $b_h$ given by
\begin{equation}
  b_h(w,v)  =      \left(\beta \ju{w} , \ju{v}  \right)_\Gamma
-   \left( \me{k \frac{\partial w}{\partial n}}, \ \ju{v} \right)_\Gamma- 
 \left(   \ju{w}, \me{k \frac{\partial v}{\partial n}} \right)_\Gamma .
\end{equation}
This exact method was discussed before in \cite{JuntunenEnumath}. A similar
method with a slightly different definition for the convex combination of fluxes
was considered in \cite{Juntunen2015}.

\begin{remark}[On the choice of the method]
    The performance of the different methods is equal by all practical measures
    when $k_1 \gg k_2$. The variational formulation of Method III has fewer terms
    and is therefore simpler to implement than Method I.
\end{remark}

\section{A priori error analysis}

In this section, we perform  a priori error analyses of the stabilised
formulations which then, by construction, carry over to the  Nitsche's
formulations. We will perform the analysis in full detail for  Method I and briefly indicate the differences in analysing the other two methods.

 In order to prove the a priori estimate (Theorem~\ref{apriori}),
we need a stability estimate for the discrete bilinear form $\Bf_h$.  The stability
estimate is proven using Lemma~\ref{lem:invest} which follows from a scaling argument:

\begin{lemma}[Discrete trace estimate]
    \label{lem:invest}
    There exists $C_I > 0$, independent of $h$, such that
    \[
        C_I \sum_{E \in \Gh^i} \frac{h_E}{k_i} \left\|k_i \frac{\partial v_{i,h}}{\partial n} \right\|_{0,E}^2 \leq k_i \|\nabla v_{i,h}\|_{0, \Omega_i}^2 \quad \forall v_{i,h} \in V_{i,h}, \quad i = 1,2.
    \]
\end{lemma}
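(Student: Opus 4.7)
The plan is to reduce the inequality to a standard local polynomial trace estimate on each boundary element, then sum. First, note that the factor $k_i$ appears on both sides: dividing through, the claim is equivalent to
\[
C_I \sum_{E \in \Gh^i} h_E \left\|\frac{\partial v_{i,h}}{\partial n}\right\|_{0,E}^2 \;\leq\; \|\nabla v_{i,h}\|_{0,\Omega_i}^2 \qquad \forall v_{i,h} \in V_{i,h}.
\]
So the material parameter plays no role and the estimate is purely a shape-regularity and polynomial-degree statement.

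The first step is the local bound. For each $E \in \Gh^i$ there is a unique $K_E \in \Ch^i$ with $E \subset \partial K_E$ (since $E \subset \Gamma \subset \partial\Omega_i$). For any polynomial $q$ of degree $\le p$ on $K_E$, a scaling argument from the reference simplex $\hat K$ gives
\[
\|q\|_{0,E}^2 \;\le\; C\, h_E^{-1}\, \|q\|_{0,K_E}^2,
\]
with $C$ depending only on $p$ and the shape-regularity constant of the mesh. I would apply this to each component of $\nabla v_{i,h}|_{K_E}$, which is a polynomial of degree $\le p-1$, and conclude
\[
h_E \left\|\frac{\partial v_{i,h}}{\partial n}\right\|_{0,E}^2 \;\le\; h_E\,\|\nabla v_{i,h}\|_{0,E}^2 \;\le\; C\, \|\nabla v_{i,h}\|_{0,K_E}^2.
\]

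The second step is the global assembly. Since each $E \in \Gh^i$ lies on $\Gamma = \partial\Omega_i \cap \partial\Omega_{3-i}$, no two distinct facets of $\Gh^i$ can share a common element $K_E$ (a simplex has only one face opposite each vertex, and in particular at most one face lying on a flat portion of $\partial\Omega_i$ when $\Gamma$ is flat; in the general polyhedral case one still has at most a fixed, shape-regularity-dependent number of such faces per element, which is absorbed in the constant). Summing the local estimates over $E \in \Gh^i$ therefore yields
\[
\sum_{E \in \Gh^i} h_E \left\|\frac{\partial v_{i,h}}{\partial n}\right\|_{0,E}^2 \;\le\; C \sum_{E \in \Gh^i} \|\nabla v_{i,h}\|_{0,K_E}^2 \;\le\; C\,\|\nabla v_{i,h}\|_{0,\Omega_i}^2,
\]
and setting $C_I = 1/C$ and multiplying through by $k_i$ gives the stated estimate.

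The only real step requiring care is the local polynomial trace inequality. It is completely standard (Warburton--Hesthaven type, or a direct compactness/scaling argument on $\hat K$), but the constant depends on $p$ and on the shape-regularity constant, which is why it is essential that the family $\{\Ch^i\}_h$ be shape-regular. No other difficulties arise; in particular the analysis does not see the second subdomain or the interface mesh $\Gh^\cap$, because the estimate is entirely one-sided and element-local.
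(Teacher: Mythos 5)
Your proof is correct and is precisely the ``scaling argument'' that the paper invokes without detail for Lemma~\ref{lem:invest}: a local polynomial trace inequality $\|q\|_{0,E}^2 \lesssim h_E^{-1}\|q\|_{0,K_E}^2$ on each boundary element, followed by summation using the bounded overlap of the elements $K_E$. The reduction eliminating $k_i$ and the pointwise bound $|\partial v_{i,h}/\partial n|\le|\nabla v_{i,h}|$ are both sound, so nothing further is needed.
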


The discrete stability of Method I will be established in the mesh-dependent norm
\begin{equation}
 \norm{(w_h,\xi_h)}_h^2= \norm{(w_h,\xi_h)}^2 + \sum_{i=1}^2 \sum_{E \in \Gh^i} \frac{h_E}{k_i}\|\xi_h\|_{0,E}^2.
\end{equation}
Note, however, that  trivially we have
\begin{equation}
    \norm{(w_h,\xi_h)}_h \geq \norm{(w_h,\xi_h)}.
\end{equation}
\begin{theorem}[Discrete stability]
    \label{discstab}
    Suppose that $0 < \alpha < C_I$. Then for every $(w_h,\xi_h) \in V_h \times Q_h$ there exists $(v_h,\mu_h) \in V_h \times Q_h$ such that
    \begin{equation}
        \label{eq:discstab1}
        \Bf_h(w_h,\xi_h; v_h, \mu_h) \gtrsim \norm{(w_h,\xi_h)}_h^2
    \end{equation}
    and
    \begin{equation}
        \label{eq:discstab2}
        \norm{(v_h,\mu_h)} _h\lesssim \norm{(w_h,\xi_h)}_h.
    \end{equation}
\end{theorem}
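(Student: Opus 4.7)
The plan is to combine two tests in the classical Brezzi--Pitkäranta / Stenberg style: a diagonal test that exploits coercivity on the gradient plus the stabilisation, and a second test obtained by lifting the continuous inf-sup function into $V_h$ to recover the negative-norm of the multiplier.

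Step 1. Take first $(v_h^{(1)},\mu_h^{(1)})=(w_h,-\xi_h)$. The off-diagonal Lagrange-multiplier terms in $\Bf$ cancel, so $\Bf(w_h,\xi_h;w_h,-\xi_h)=\sum_i k_i\|\nabla w_{i,h}\|_{0,\Omega_i}^2$. Expanding $\Sf$ with the identity $(a-b,-a-b)=b^2-a^2$ gives
\begin{equation*}
\alpha\Sf(w_h,\xi_h;w_h,-\xi_h)=\alpha\sum_{i=1}^{2}\sum_{E\in\Gh^i}\tfrac{h_E}{k_i}\bigl(\|k_i\partial w_{i,h}/\partial n\|_{0,E}^2-\|\xi_h\|_{0,E}^2\bigr).
\end{equation*}
Invoking Lemma~\ref{lem:invest} to absorb the first sum into $\sum_i k_i\|\nabla w_{i,h}\|_{0,\Omega_i}^2$ and using the assumption $\alpha<C_I$, I obtain
\begin{equation*}
\Bf_h(w_h,\xi_h;w_h,-\xi_h)\gtrsim\sum_{i=1}^{2}k_i\|\nabla w_{i,h}\|_{0,\Omega_i}^2+\sum_{i=1}^{2}\sum_{E\in\Gh^i}\tfrac{h_E}{k_i}\|\xi_h\|_{0,E}^2.
\end{equation*}

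Step 2. To recover the missing $\tfrac{1}{k_i}\|\xi_h\|_{-\frac12,\Gamma}^2$ contribution, apply the continuous inf-sup \eqref{continfsup} to pick $z\in V$ with $\langle\ju{z},\xi_h\rangle\gtrsim(1/k_1+1/k_2)\|\xi_h\|_{-\frac12,\Gamma}^2$ and $\sum_i k_i\|\nabla z_i\|_{0,\Omega_i}^2\lesssim(1/k_1+1/k_2)\|\xi_h\|_{-\frac12,\Gamma}^2$. Let $\tilde z=I_h z\in V_h$ be a Scott--Zhang interpolant preserving the boundary conditions. Standard Scott--Zhang bounds give $\sum_i k_i\|\nabla\tilde z_i\|_{0,\Omega_i}^2\lesssim\sum_i k_i\|\nabla z_i\|_{0,\Omega_i}^2$ together with the weighted trace estimate $\sum_{E\in\Gh^i}h_E^{-1}\|z_i-\tilde z_i\|_{0,E}^2\lesssim\|\nabla z_i\|_{0,\Omega_i}^2$. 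Pairing this with the stabilisation seminorm via weighted Cauchy--Schwarz,
\begin{equation*}
|\langle\ju{z-\tilde z},\xi_h\rangle|\lesssim\Bigl(\sum_{i}k_i\|\nabla z_i\|_{0,\Omega_i}^2\Bigr)^{1/2}\Bigl(\sum_{i,E}\tfrac{h_E}{k_i}\|\xi_h\|_{0,E}^2\Bigr)^{1/2},
\end{equation*}
so $\langle\ju{\tilde z},\xi_h\rangle$ inherits a lower bound proportional to $\sum_i\tfrac{1}{k_i}\|\xi_h\|_{-\frac12,\Gamma}^2$ up to a term absorbable by Step~1. Computing $\Bf_h(w_h,\xi_h;-\tilde z,0)$, the remaining pieces $-\sum_i k_i(\nabla w_{i,h},\nabla\tilde z_i)_{\Omega_i}$ and $\alpha\Sf(w_h,\xi_h;\tilde z,0)$ are likewise bounded, using Lemma~\ref{lem:invest} for the normal derivative of $\tilde z$, by $(\sum_i k_i\|\nabla w_{i,h}\|_{0,\Omega_i}^2+\sum_{i,E}\tfrac{h_E}{k_i}\|\xi_h\|_{0,E}^2)^{1/2}(\sum_i k_i\|\nabla\tilde z_i\|_{0,\Omega_i}^2)^{1/2}$.

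Step 3. Choose the combined test $(v_h,\mu_h)=(w_h-\delta\tilde z,-\xi_h)$ with $\delta>0$ small. Young's inequality lets the Step~2 cross-terms be absorbed into the positive terms from Step~1, yielding the lower bound \eqref{eq:discstab1}. The continuity \eqref{eq:discstab2} follows from the triangle inequality and the Scott--Zhang stability of $\tilde z$ since $\sum_i k_i\|\nabla\tilde z_i\|_{0,\Omega_i}^2\lesssim(1/k_1+1/k_2)\|\xi_h\|_{-\frac12,\Gamma}^2\lesssim\norm{(w_h,\xi_h)}_h^2$, and $\sum_{i,E}\tfrac{h_E}{k_i}\|\xi_h\|_{0,E}^2$ is already part of the right-hand side norm. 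The main delicate point will be keeping track of the $k_i$-weights: the interpolation estimate on $\langle\ju{z-\tilde z},\xi_h\rangle$ must be paired with the stabilisation seminorm with precisely the weights $h_E/k_i$, and it is this match — enforced by the choice of $\Sf$ in \eqref{stabterm} — that makes the Fortin-type correction work without deteriorating in the parameter ratio $k_1/k_2$.
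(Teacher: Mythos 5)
Your proposal is correct and follows essentially the same route as the paper's proof: the diagonal test $(w_h,-\xi_h)$ controlled via Lemma~\ref{lem:invest}, followed by a correction $-\delta\,\tilde z$ built from an interpolant of the continuous inf-sup supremizer, with the interpolation error on $\Gamma$ absorbed into the stabilisation seminorm and $\delta$ chosen small. The only (immaterial) difference is your use of the Scott--Zhang interpolant where the paper uses the Cl\'ement interpolant; both satisfy the required $k_i$-weighted stability and trace bounds.
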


\begin{proof}
    Applying the discrete trace estimate leads to  stability in the mesh-dependent part of the norm
    \begin{equation}
    \label{discmeshnorm}
    \begin{aligned}        \Bf_h(w_h, \xi_h; w_h, -\xi_h)& \geq (1-\alpha C_I^{-1}) \sum_{i = 1}^2 k_i \|\nabla w_{i,h}\|_{0, \Omega_i}^2 + \alpha \sum_{i=1}^2 \sum_{E \in \Gh^i} \frac{h_E}{k_i} \|\xi_h\|_{0,E}^2
        \\
        &
        \geq C_1 \Bigg(   \sum_{i = 1}^2 k_i \|\nabla w_{i,h}\|_{0, \Omega_i}^2 +  \sum_{i=1}^2 \sum_{E \in \Gh^i} \frac{h_E}{k_i} \|\xi_h\|_{0,E}^2\Bigg).
        \end{aligned}
    \end{equation}
    Next, we recall the steps (cf. \cite{Franca-Stenberg}) for extending the result to the continuous part of the norm.
    By the continuous inf-sup condition \eqref{continfsup},  for any $\xi_h \in Q_h$ there exists $v\in V $ such that
    \begin{equation}
  \frac{\langle \ju{v}, \xi_h\rangle}{\left(\sum_{i = 1}^2 k_i \|\nabla v_i\|_{0,\Omega_i}^2\right)^{1/2}}
\geq C   \left(\frac{1}{k_1} + \frac{1}{k_2} \right)^{1/2} \|\xi_h\|_{-\frac12,\Gamma}. 
    \end{equation}
    Consequently,  there exist positive constants $C_2, \, C_3, \, C_4$, such that for the Cl\'ement interpolant $\clev\in V_h$ of $v$ it holds
    \begin{align}
        &\langle \ju{\clev}, \xi_h\rangle\geq  C_2  \left(\frac{1}{k_1} + \frac{1}{k_2} \right) \|\xi_h\|_{-\frac12,\Gamma}^2-C_3 \sum_{i=1}^2 \sum_{E \in \Gh^i} \frac{h_E}{k_i} \|\xi_h\|_{0,E}^2,   \\
        &\sum_{i = 1}^2 k_i \|\nabla v_{i,h}\|_{0,\Omega_i}^2 \leq C_4  \left(\frac{1}{k_1} + \frac{1}{k_2} \right) \|\xi_h\|_{-\frac12,\Gamma}^2. \label{clementupper}
    \end{align}
    Using the Cauchy--Schwarz inequality, the arithmetic-geometric mean inequality, and the discrete trace estimate (Lemma~\ref{lem:invest}), we then see that
    \begin{equation}
        \label{disclowbound1}
        \begin{aligned}
            \Bf_h(w_h,\xi_h; -\clev, 0) &= - \sum_{i=1}^2 (k_i \nabla w_{i,h}, \nabla \clev _{i})+ \langle \ju{\clev}, \xi_h \rangle \\
                                      &\qquad - \sum_{i=1}^2 \sum_{E \in \Gh^i} h_E \left(\xi_h - k_i \frac{\partial w_{i,h}}{\partial n}, \frac{\partial v_{i,h}}{\partial n} \right)_E
                                      \\ 
                                      &\geq -C_5\Bigg( \sum_{i=1}^2 k_i \|\nabla w_{i,h}\|_{0,\Omega_i}^2 +   \sum_{i=1}^2 \sum_{E \in \Gh^i} \frac{h_E}{k_i}\|\xi_h\|_{0,E}^2\Bigg) \\
                                  &\qquad + C_6 \left(\frac{1}{k_1} + \frac{1}{k_2} \right) \|\xi_h\|_{-\frac12,\Gamma}^2.
        \end{aligned}
    \end{equation}
Combining  estimates \eqref{discmeshnorm} and \eqref{disclowbound1}, we finally obtain
    \begin{align*}
        \Bf_h(w_h,\xi_h; w_h -\delta \clev, -\xi_h) & 
        \geq (C_1-\delta C_5)\Bigg( \sum_{i=1}^2 k_i \|\nabla w_{i,h}\|_{0,\Omega_i}^2 +   \sum_{i=1}^2 \sum_{E \in \Gh^i} \frac{h_E}{k_i}\|\xi_h\|_{0,E}^2\Bigg) \\
                                  &\qquad + \delta C_6 \left(\frac{1}{k_1} + \frac{1}{k_2} \right) \|\xi_h\|_{-\frac12,\Gamma}^2
                                  \\& 
                                  \geq C_7 
                                  \Bigg( \sum_{i=1}^2 k_i \|\nabla w_{i,h}\|_{0,\Omega_i}^2 +   \sum_{i=1}^2 \sum_{E \in \Gh^i} \frac{h_E}{k_i}\|\xi_h\|_{0,E}^2 
                                  \\& \qquad +
                        \left(\frac{1}{k_1} + \frac{1}{k_2} \right) \|\xi_h\|_{-\frac12,\Gamma}^2 \Bigg),
    \end{align*}
    where the last bound follows from choosing $0<\delta < C_1/C_5$.
    
    In order to obtain \eqref{eq:discstab2}, we first use the triangle inequality and \eqref{clementupper} to get
    \begin{equation*}
        \norm{(w_h-\delta \clev, -\xi_h)} \leq \norm{(w_h, \xi_h)}.
    \end{equation*}
    The claim follows by adding
    \begin{equation*}
        \sum_{i=1}^2 \sum_{E \in \Gh^i} \frac{h_E}{k_i}\|\xi_h\|_{0,E}^2
    \end{equation*}
    to the both sides of the inequality.
    \qed
\end{proof}

We will need one more lemma before we can establish an optimal a priori estimate. 
Let $f_h\in V_h$ an approximation  of $f$ and define
\begin{equation}
\osc_K(f)=h_K\Vert f-f_h\Vert_{0,K}.
\end{equation}
Moreover, for each $E \in \Gh^i$, denote by $K(E) \in \Ch^i$ the element satisfying $\partial K(E) \cap E = E$. 

\begin{lemma}
    \label{lem:lowresidual}
    For an arbitrary $(v_h, \mu_h) \in V_h \times Q_h$ it holds
    \begin{equation}
        \label{eq:lowresidual}
        \begin{aligned}
            &\Bigg(\sum_{i=1}^2 \sum_{E \in \Gh^i} \frac{h_E}{k_i} \left\| \mu_h - k_i \frac{\partial v_{i,h}}{\partial n} \right\|_{0,E}^2\Bigg)^{1/2} \\
            &\qquad \lesssim \norm{(u-v_h,\lambda-\mu_h)} + \Bigg(\sum_{i=1}^2\sum_{E \in \Gh^i} \osc_{K(E)} (f)^2\Bigg)^{1/2}.
        \end{aligned}
    \end{equation}
\end{lemma}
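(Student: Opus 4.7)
The plan is to apply the standard Verf\"urth edge-bubble technique, scaled with $h_E/k_i$ so that the resulting bound naturally reproduces the energy norm \eqref{energynorm}. Fix $i\in\{1,2\}$, denote $r_E:=\mu_h-k_i\partial v_{i,h}/\partial n$ on each $E\in\Gh^i$, and let $K(E)$ be the adjacent element. Let $\psi_E$ be the standard edge bubble on $K(E)$ which vanishes on $\partial K(E)\setminus E$, and let $\tilde r_E$ be a polynomial extension of $r_E$ into $K(E)$. Set $\phi_E:=\psi_E\tilde r_E$, which belongs to $V_i$ by its support. Polynomial equivalence and scaling yield
\[
\|r_E\|_{0,E}^2\lesssim\int_E r_E\phi_E,\quad \|\phi_E\|_{0,K(E)}\lesssim h_E^{1/2}\|r_E\|_{0,E},\quad \|\nabla\phi_E\|_{0,K(E)}\lesssim h_E^{-1/2}\|r_E\|_{0,E}.
\]

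I would then form the global test function $\phi:=\sum_{E\in\Gh^i}(h_E/k_i)\phi_E\in V_i$ and obtain, by summing the edge bubble inequalities, $\sum_E(h_E/k_i)\|r_E\|_{0,E}^2\lesssim\int_\Gamma r_i\phi$, where $r_i|_E=r_E$. Since each $\phi_E$ vanishes on $\partial K(E)\setminus E$, the support of $\phi$ is confined to the elements $K(E)$ and all internal edge jump contributions cancel during element-wise integration by parts in $\Omega_i$. Combining this with $-\nabla\cdot(k_i\nabla u_i)=f$ and the identity $\lambda=k_i\partial u_i/\partial n$ on $\Gamma$ rewrites the right-hand side as
\[
\langle\mu_h-\lambda,\phi\rangle_\Gamma+\int_{\Omega_i}k_i\nabla(u_i-v_{i,h})\cdot\nabla\phi-\sum_{E\in\Gh^i}\int_{K(E)}\bigl(f+\nabla\cdot(k_i\nabla v_{i,h})\bigr)\phi.
\]

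The three terms are now bounded in turn. The trace theorem gives $\langle\mu_h-\lambda,\phi\rangle_\Gamma\lesssim\|\mu_h-\lambda\|_{-\frac12,\Gamma}\|\phi\|_{1,\Omega_i}$, and the gradient term is controlled by Cauchy--Schwarz. For the interior residual, I would split $f+\nabla\cdot(k_i\nabla v_{i,h})=(f-f_h)+(f_h+\nabla\cdot(k_i\nabla v_{i,h}))$ and apply a standard interior-bubble argument to the polynomial part, obtaining $\|f_h+\nabla\cdot(k_i\nabla v_{i,h})\|_{0,K}\lesssim\|f-f_h\|_{0,K}+k_i h_K^{-1}\|\nabla(u_i-v_{i,h})\|_{0,K}$, so this term reduces to a sum of oscillations $\osc_{K(E)}(f)$ and gradient errors. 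The disjoint-support estimate $\|\phi\|_{1,\Omega_i}^2\lesssim k_i^{-1}\sum_E(h_E/k_i)\|r_E\|_{0,E}^2$ then makes a common factor $\bigl(\sum_E(h_E/k_i)\|r_E\|_{0,E}^2\bigr)^{1/2}$ appear in all three bounds; dividing it off, squaring, and summing over $i=1,2$ recovers exactly the scaled energy norm in \eqref{energynorm}.

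The main obstacle is the global nature of the $H^{-\frac12}(\Gamma)$ dual norm. Any edge-by-edge estimation of $\|\mu_h-\lambda\|_{-\frac12,\Gamma}$ would introduce an uncontrolled number-of-edges factor, so the essential step is to build the single global test function $\phi$ before invoking duality. The weights $h_E/k_i$ in the definition of $\phi$ are precisely those appearing in the target sum, because they are what produces the correct $k_i^{-1/2}$ prefactor in the $H^1(\Omega_i)$ norm of $\phi$ to be paired against $\|\lambda-\mu_h\|_{-\frac12,\Gamma}$ in the energy norm.
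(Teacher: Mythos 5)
Your proposal is correct and follows essentially the same route as the paper's proof: a weighted edge-bubble test function supported on the boundary elements $K(E)$, the exact problem (you use the strong form plus integration by parts, the paper tests the mixed weak form with $(\sigma,0,0)$ --- the same identity), inverse estimates giving $k_i\|\phi\|_{1,\Omega_i}^2\lesssim\sum_E (h_E/k_i)\|r_E\|_{0,E}^2$, duality for the $\|\lambda-\mu_h\|_{-\frac12,\Gamma}$ term, and the standard interior-residual lower bound to produce the oscillation term. Your remark on why the test function must be assembled globally before invoking the $H^{-\frac12}(\Gamma)$ duality is exactly the point that makes the paper's construction work.
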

\begin{proof}
    Let $b_E \in P_d(E) \cap H^1_0(E)$, $E \in \Gh^1$, be the edge/facet bubble function with maximum value one.
    Define $\sigma_E$ as the polynomial defined on $K(E)$ through
    \[
        \sigma_E\big|_E = \frac{h_E b_E}{k_1} \Big(\mu_h - k_1\frac{\partial v_{1,h}}{\partial n}\Big) \quad \text{and} \quad \sigma_E\big|_{\partial K(E) \setminus E} = 0.
    \]
    We have by the norm equivalence in polynomial spaces
    \begin{align*}
        \frac{h_E}{k_1}\Big\|\mu_h-k_1 \frac{\partial v_{1,h}}{\partial n}\Big\|^2_{0,E} &\lesssim \frac{h_E}{k_1}\Big\|\sqrt{b_E}\Big(\mu_h-k_1\frac{\partial v_{1,h}}{\partial n}\Big)\Big\|^2_{0,E} \\
                                                                     &=\Big( \mu_h-k_1 \frac{\partial v_{1,h}}{\partial n}, \sigma_E \Big)_E.
    \end{align*}
    Let $\sigma = \sum_{E \in \Gh^1} \sigma_E$. Testing the continuous
    variational problem with $(v_1, v_2, \mu) = (\sigma, 0, 0)$ gives $(k_1
    \nabla u_1, \nabla \sigma)_{\Omega_1} - \langle \sigma, \lambda \rangle -
    (f,\sigma)_{\Omega_1} = 0$. This leads to
    \begin{align*}
        &\sum_{E \in \Gh^1} \frac{h_E}{k_1} \Big\|\mu_h-k_1 \frac{\partial v_{1,h}}{\partial n}\Big\|^2_{0,E} \\
        &\lesssim \langle \sigma, \mu_h - \lambda \rangle + (k_1 \nabla u_1, \nabla \sigma)_{\Omega_1} - (f,\sigma)_{\Omega_1} - \sum_{E \in \Gh^1} \left( k_1 \frac{\partial v_{1,h}}{\partial n}, \sigma_E \right)_E \\
        &=\langle \sigma, \mu_h - \lambda \rangle + (k_1 \nabla u_1, \nabla \sigma)_{\Omega_1} - (f,\sigma)_{\Omega_1} \\
        &\quad- \sum_{E \in \Gh^1}\left((\nabla \cdot k_1 \nabla v_{1,h},\sigma_E)_{K(E)}+(k_1 \nabla v_{1,h}, \nabla \sigma_E)_{K(E)} \right) \\
        &=\langle \sigma, \mu_h - \lambda \rangle+(k_1 \nabla(u_1-v_{1,h}),\nabla \sigma)_{\Omega_1} + \sum_{E \in \Gh^1} (-\nabla \cdot k_1 \nabla v_{1,h} -f,\sigma_E)_{K(E)}.
    \end{align*}
    By inverse estimates 
    \begin{equation}
        \label{eq:invgamma}
        k_1 \|\sigma\|_{1,\Omega_1}^2~\lesssim k_1 \sum_{E \in \Gh^1} h_E^{-2} \|\sigma_E\|_{0,K(E)}^2~\lesssim \sum_{E \in \Gh^1} \frac{h_E}{k_1} \Big\|\mu_h-k_1 \frac{\partial v_{1,h}}{\partial n}\Big\|_{0,E}^2.
    \end{equation}
Using the Cauchy--Schwarz and the trace inequalities, it then follows that
    \begin{align*}
        &\sum_{E \in \Gh^1} \frac{h_E}{k_1} \Big\|\mu_h-k_1 \frac{\partial v_{1,h}}{\partial n}\Big\|^2_{0,E} \\
        &\lesssim \|\lambda - \mu_h\|_{-\frac12,\Gamma} \|\sigma\|_{\frac12,\Gamma} + k_1 \|\nabla(u_1-v_{1,h})\|_{0,\Omega_1} \|\nabla \sigma\|_{0,\Omega_1} \\
        &\quad+\sum_{E \in \Gh^1} \|\nabla \cdot k_1 \nabla v_{1,h} + f\|_{0,K(E)} \|\sigma_E\|_{0,K(E)}\\
        &\lesssim \frac{1}{\sqrt{k_1}}\|\lambda - \mu_h\|_{-\frac12,\Gamma} \sqrt{k_1} \|\sigma\|_{1,\Omega_1} + \sqrt{k_1}\|\nabla(u_1-v_{1,h})\|_{0,\Omega_1} \sqrt{k_1} \|\sigma\|_{1,\Omega_1}\\
        &\quad+\Bigg(\sum_{E \in \Gh^1} \frac{h_E^2}{k_1} \|\nabla \cdot k_1 \nabla v_{1,h} + f\|_{0,K(E)}^2\Bigg)^{1/2}\Bigg(k_1 \sum_{E \in \Gh^1} h_E^{-2} \|\sigma_E\|_{0,K(E)}^2\Bigg)^{1/2}.
    \end{align*}
    In view of the standard lower bound for interior residuals \cite{Verfurth} and the discrete inequalities \eqref{eq:invgamma},
    we conclude that
    \begin{equation}
    \begin{aligned}
        &\Bigg(
        \sum_{E \in \Gh^1} \frac{h_E}{k_1} \Big\|\mu_h-k_1 \frac{\partial v_{1,h}}{\partial n}\Big\|^2_{0,E}
        \Bigg)^{1/2} \\
        &\lesssim \sqrt{k_1} \|\nabla(u_1 - v_{1,h})\|_{0,\Omega_1} + \frac{1}{\sqrt{k_1}} \|\lambda - \mu_h\|_{-\frac12,\Gamma}
                                                                                                                    \quad + \Bigg(\sum_{E \in \Gh^1} \osc_{K(E)} (f)^2\Bigg)^{1/2}.
    \end{aligned}
    \end{equation}
    The estimate in $\Omega_2$ is proven similarly.
   Adding the estimates in $\Omega_1 $ and $\Omega_2$ leads to \eqref{eq:lowresidual}.
    \qed
\end{proof}

The proof of the a priori estimate is now straightforward.

\begin{theorem}[A priori estimate]
    The exact solution $(u,\lambda) \in V \times Q$ of \eqref{contprob} and
    the discrete solution $(u_h,\lambda_h) \in V_h \times Q_h$ of \eqref{discprob} satisfy
    \begin{equation}
        \begin{aligned}
            &\norm{(u-u_h,\lambda-\lambda_h)} \\
            &\quad \lesssim \inf_{(v_h,\mu_h)\in V_h \times Q_h} \norm{(u-v_h,\lambda-\mu_h)} + \Bigg(\sum_{i=1}^2\sum_{E \in \Gh^i} \osc_{K(E)} (f)^2\Bigg)^{1/2}.
        \end{aligned}
    \end{equation}
    \label{apriori}
\end{theorem}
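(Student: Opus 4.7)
The plan is to run the standard Cea-type argument for stabilised mixed methods (in the spirit of \cite{Franca-Stenberg}), combining the discrete inf--sup bound of Theorem~\ref{discstab} with Galerkin orthogonality and a bound on $\Bf_h$ restricted to the error, where the stabilisation term is controlled via Lemma~\ref{lem:lowresidual}.

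First, I fix an arbitrary $(v_h,\mu_h)\in V_h\times Q_h$ and use the triangle inequality to reduce the claim to estimating $\norm{(u_h-v_h,\lambda_h-\mu_h)}$. By Theorem~\ref{discstab} there exists $(w_h,\xi_h)\in V_h\times Q_h$ with $\norm{(w_h,\xi_h)}_h\lesssim\norm{(u_h-v_h,\lambda_h-\mu_h)}_h$ and
\[
\Bf_h(u_h-v_h,\lambda_h-\mu_h;w_h,\xi_h)\gtrsim\norm{(u_h-v_h,\lambda_h-\mu_h)}_h^2.
\]
The key consistency observation is that since $\lambda=k_i\frac{\partial u_i}{\partial n}$ on $\Gamma$, the stabilisation term $\Sf(u,\lambda;\cdot,\cdot)$ vanishes; combined with the continuous problem \eqref{contprob} this yields Galerkin orthogonality $\Bf_h(u-u_h,\lambda-\lambda_h;w_h,\xi_h)=0$, so it suffices to bound $\Bf_h(u-v_h,\lambda-\mu_h;w_h,\xi_h)$.

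Next, I bound each term of $\Bf_h(u-v_h,\lambda-\mu_h;w_h,\xi_h)$ by $\bigl(\norm{(u-v_h,\lambda-\mu_h)}+(\sum\osc_{K(E)}(f)^2)^{1/2}\bigr)\cdot\norm{(w_h,\xi_h)}_h$. The volume terms $(k_i\nabla(u_i-v_{i,h}),\nabla w_{i,h})_{\Omega_i}$ are immediate by Cauchy--Schwarz with the $k_i$-weights distributed symmetrically. For the duality pairings $\langle\ju{u-v_h},\xi_h\rangle$ and $\langle\ju{w_h},\lambda-\mu_h\rangle$, I use the trace estimate $\|\ju{v}\|_{\frac12,\Gamma}\lesssim\sum_i\|\nabla v_i\|_{0,\Omega_i}$ (valid in $V$ and $V_h$ thanks to Poincar\'e and the boundary conditions in $V_i$), then distribute the $k_i$-weights by inserting $k_i^{\pm 1/2}$ and applying Cauchy--Schwarz on the index $i$; the factor $\sqrt{1/k_1+1/k_2}$ naturally combines the $H^{-1/2}$-part of $\norm{\cdot}$ with the gradient part. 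Finally, for the stabilisation term $\alpha\Sf(u-v_h,\lambda-\mu_h;w_h,\xi_h)$, I apply Cauchy--Schwarz on each edge and note that, because $\lambda=k_i\partial u_i/\partial n$, the first factor equals $\bigl(\sum_i\sum_E\frac{h_E}{k_i}\|\mu_h-k_i\partial v_{i,h}/\partial n\|_{0,E}^2\bigr)^{1/2}$, which is exactly what Lemma~\ref{lem:lowresidual} controls by $\norm{(u-v_h,\lambda-\mu_h)}+\osc$; the second factor is bounded by $\norm{(w_h,\xi_h)}_h$ using the triangle inequality plus the discrete trace estimate (Lemma~\ref{lem:invest}).

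Assembling these bounds, cancelling one factor of $\norm{(w_h,\xi_h)}_h\lesssim\norm{(u_h-v_h,\lambda_h-\mu_h)}_h$, and invoking the trivial bound $\norm{\cdot}\le\norm{\cdot}_h$, I conclude $\norm{(u_h-v_h,\lambda_h-\mu_h)}\lesssim\norm{(u-v_h,\lambda-\mu_h)}+(\sum\osc_{K(E)}(f)^2)^{1/2}$, and a final triangle inequality followed by the infimum over $(v_h,\mu_h)$ gives the claim. The main obstacle is really only the stabilisation term, whose control hinges on the identity $\lambda-k_i\partial u_i/\partial n=0$ on $\Gamma$ together with Lemma~\ref{lem:lowresidual}; every other step is either a Cauchy--Schwarz manipulation tracking the $k_i$-weights or a direct appeal to the discrete stability from Theorem~\ref{discstab}.
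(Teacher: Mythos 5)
Your overall strategy --- discrete inf--sup from Theorem~\ref{discstab}, an error identity splitting off the stabilisation term, continuity of $\Bf$ in the continuous norm for the conforming part, and Lemma~\ref{lem:lowresidual} plus the discrete trace estimate for the stabilisation part --- is exactly the paper's, and your final bounds coincide with theirs. The one substantive problem is the route you take to the error identity. You invoke Galerkin orthogonality by arguing that $\Sf(u,\lambda;\cdot,\cdot)$ vanishes because $\lambda = k_i\,\partial u_i/\partial n$ on $\Gamma$, and later you use the same identity again to turn $\Sf(u-v_h,\lambda-\mu_h;w_h,\xi_h)$ into $-\Sf(v_h,\mu_h;w_h,\xi_h)$. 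Both steps evaluate the stabilisation form at the exact solution, which requires $\lambda - k_i\,\partial u_i/\partial n$ to be a well-defined element of $L^2(E)$ on each interface facet; for $(u,\lambda)\in V\times Q$ this is precisely the $H^{s}$, $s>3/2$, regularity that the paper announces in its introduction it wants to eliminate. If $u_i$ is only in $H^1(\Omega_i)$, the quantity $\Sf(u,\lambda;\cdot,\cdot)$ is not even defined, so ``Galerkin orthogonality'' in your sense is not available at the stated level of generality.

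The paper sidesteps this entirely by never applying $\Sf$ to the exact solution: since $(w_h,\xi_h)\in V_h\times Q_h\subset V\times Q$, both $\Bf_h(u_h,\lambda_h;w_h,\xi_h)=\Lf(w_h)$ and $\Bf(u,\lambda;w_h,\xi_h)=\Lf(w_h)$ hold, and subtracting $\Bf_h(v_h,\mu_h;w_h,\xi_h)=\Bf(v_h,\mu_h;w_h,\xi_h)-\alpha\,\Sf(v_h,\mu_h;w_h,\xi_h)$ yields
\begin{equation*}
\Bf_h(u_h-v_h,\lambda_h-\mu_h;w_h,\xi_h)=\Bf(u-v_h,\lambda-\mu_h;w_h,\xi_h)+\alpha\,\Sf(v_h,\mu_h;w_h,\xi_h),
\end{equation*}
in which $\Sf$ carries only discrete arguments. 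From there the two proofs agree: continuity of $\Bf$ in $\norm{\cdot}$ for the first term, and Lemma~\ref{lem:lowresidual} --- whose whole purpose is to control $\|\mu_h-k_i\,\partial v_{i,h}/\partial n\|_{0,E}$ \emph{without} identifying it with $\lambda-k_i\,\partial u_i/\partial n$ --- together with Lemma~\ref{lem:invest} for the second. So your argument is correct under extra regularity of $u$, but to obtain the theorem for general $(u,\lambda)\in V\times Q$ you should replace the consistency/orthogonality step by this purely algebraic combination of the continuous and discrete variational equations.
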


\begin{proof}
    The discrete stability estimate guarantees the existence of $(w_h,\xi_h) \in V_h \times Q_h$,
    with $    
        \norm{(w_h,\xi_h)}_h =1$,
 such that
    for any $(v_h,\mu_h) \in V_h \times Q_h$ it holds
    \[
        \norm{(u_h-v_h,\lambda_h-\mu_h)} \leq \norm{(u_h-v_h,\lambda_h-\mu_h)}_h \\
       \lesssim \Bf_h(u_h-v_h,\lambda_h-\mu_h; w_h,\xi_h).
    \]
    We have
    \[
         \Bf_h(u_h-v_h,\lambda_h-\mu_h; w_h,\xi_h) \\
        = \Bf(u-v_h,\lambda-\mu_h; w_h,\xi_h) + \alpha \Sf(v_h,\mu_h; w_h, \xi_h).
    \]
    The first term above is estimated using the continuity of $\Bf$ in the continuous norm
    \begin{equation*}
     \Bf(u-v_h,\lambda-\mu_h; w_h,\xi_h)   \lesssim \norm{(u-v_h,\lambda-\mu_h)}       \cdot   \norm{(w_h,\xi_h)}\lesssim  \norm{(u-v_h,\lambda-\mu_h)} . 
    \end{equation*}
    For the second term, the Cauchy--Schwarz inequality,
    Lemma~\ref{lem:lowresidual} and the discrete trace estimate yield
    \begin{equation*}
    \begin{aligned}
    \vert   \Sf(v_h,\mu_h&; w_h, \xi_h) \vert 
    \\&\lesssim 
    \left(
    \norm{(u-v_h,\lambda-\mu_h)} + \Bigg(\sum_{i=1}^2\sum_{E \in \Gh^i} \osc_{K(E)} (f)^2\Bigg)^{1/2}\right)
      \cdot   \norm{(w_h,\xi_h)}_h
      \\
      &  \lesssim    \norm{(u-v_h,\lambda-\mu_h)} + \Bigg(\sum_{i=1}^2\sum_{E \in \Gh^i} \osc_{K(E)} (f)^2\Bigg)^{1/2}. \quad\qed
      \end{aligned}
    \end{equation*}
\end{proof}

\begin{remark}[Method II] The discrete stability can be established in the norm
\[
\left( \sum_{i = 1}^2 k_i \|\nabla w_i\|_{0,\Omega_i}^2 + 
 \frac{1}{k_2} \|\xi\|_{-\frac12,\Gamma}^2 +  \sum_{E \in \Gh^2} \frac{h_E}{k_2}\|\xi_h\|_{0,E}^2.\right)^{1/2}
\]
and,  as seen from its proof,  Lemma \ref{lem:lowresidual} is valid individually for both stabilising terms. We thus obtain the a priori estimate 
  \begin{equation}
        \begin{aligned}
            &\norm{(u-u_h,\lambda-\lambda_h)} \\
            &\quad \lesssim \inf_{(v_h,\mu_h)\in V_h \times Q_h} \norm{(u-v_h,\lambda-\mu_h)} + \Bigg(\sum_{E \in \Gh^2} \osc_{K(E)} (f)^2\Bigg)^{1/2} ,
        \end{aligned}
    \end{equation}
       where   
    \begin{equation}
\norm{(w,\xi) } =  \left( \sum_{i = 1}^2 k_i \|\nabla w_i\|_{0,\Omega_i}^2 + 
 \frac{1}{k_2} \|\xi\|_{-\frac12,\Gamma}^2 \right)^{1/2} .
\label{equinorm}
\end{equation}
\end{remark}

\begin{remark}[Method III]
The analysis of the third method is similar, albeit a bit more cumbersome. The crucial observation is that we can write
\[
\xi - \me{k \frac{\partial w}{\partial n}}   = \alpha_1 \left(\xi - k_1 \frac{\partial w_1}{\partial n}  \right) + \alpha_2 \left(\xi- k_2 \frac{\partial w_2}{\partial n}  \right),
\]
where 
\[
\alpha_1 = \frac{k_2 h_1}{k_2 h_1 + k_1 h_2}, \quad \alpha_2=  \frac{k_1 h_2}{k_2 h_1 + k_1 h_2} .
\]
Given that $0 \leq \alpha_i(x) \leq 1 \ \ \forall x\in\Gamma , i=1,2,$ and $\alpha_1+\alpha_2=1$,
it can be verified using the triangle inequality that the
a priori estimate of Theorem \ref{apriori} holds also for Method III. 
\end{remark}

\section{A posteriori estimate}

Let us  first define  local residual estimators corresponding to the finite
element solution $(u_h,\lambda_h)$ through
\begin{align}
  \label{apost1}  \eta_K^2 &= \frac{h_K^2}{k_i} \| \nabla \cdot k_i \nabla u_{i,h} + f \|_{0,K}^2, \quad K \in \Ch^i, \\
  \label{apost2}  \eta_{E,\Omega}^2 &= \frac{h_E}{k_i} \left \| \left\llbracket k_i  \frac{\partial u_{i,h}}{\partial n} \right\rrbracket \right\|_{0,E}^2, \quad E \in \Eh^i, \\
 \label{apost3}   \eta_{E,\Gamma}^2 &= \frac{h_E}{k_i} \left\| \lambda_h - k_i \frac{\partial u_{i,h}}{\partial n} \right\|_{0,E}^2+\frac{k_i}{h_E} \Vert \ju{u_h}\Vert_{0,E}^2,  \quad E \in \Gh^i.
\end{align}
with $i  =1,2$.
The global error estimator is then denoted by
\begin{equation}
    \eta^2 =\sum_{i=1}^2 \Bigg( \sum_{K \in \Ch^i} \eta_K^2 + \sum_{E \in \Eh^i} \eta_{E,\Omega}^2 +  \sum_{E \in \Gh^i} \eta_{E,\Gamma}^2 \Bigg).
\end{equation}
In the following theorem we show that the error estimator $\eta$ is both efficient and reliable. 
\begin{theorem}[A posteriori estimate] It holds that
    \begin{equation}\label{reli}
        \norm{(u-u_h,\lambda-\lambda_h)} \lesssim \eta
    \end{equation}
     and
      \begin{equation}\label{effi}
           \eta \lesssim   \norm{(u-u_h,\lambda-\lambda_h)} +  \Bigg(\sum_{i=1}^2 \sum_{K \in \Ch^i} \osc_{K} (f)^2\Bigg)^{1/2}.
    \end{equation}
\end{theorem}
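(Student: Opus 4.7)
I would invoke the continuous stability (Theorem~\ref{contstab}) to pick $(v,\mu)\in V\times Q$ realising $\Bf(u-u_h,\lambda-\lambda_h;v,\mu)\gtrsim \norm{(u-u_h,\lambda-\lambda_h)}^2$ with $\norm{(v,\mu)}\lesssim \norm{(u-u_h,\lambda-\lambda_h)}$, so it suffices to show $\Bf(u-u_h,\lambda-\lambda_h;v,\mu)\lesssim \eta\cdot\norm{(v,\mu)}$. Subtracting the continuous and discrete problems yields the Galerkin orthogonality $\Bf(u-u_h,\lambda-\lambda_h;v_h,\mu_h) = -\alpha\Sf(u_h,\lambda_h;v_h,\mu_h)$ for any $(v_h,\mu_h)\in V_h\times Q_h$. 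Taking $v_h=I_hv$, a Cl\'ement-type quasi-interpolant respecting the boundary condition on $\partial\Omega_i\setminus\Gamma$, and $\mu_h=0$, and using $\ju{u}=0$ to rewrite $\langle\ju{u-u_h},\mu\rangle=-\langle\ju{u_h},\mu\rangle$, one arrives at
\[
\Bf(u-u_h,\lambda-\lambda_h;v,\mu) = \Bf(u-u_h,\lambda-\lambda_h;v-I_hv,0) - \alpha\Sf(u_h,\lambda_h;I_hv,0) + \langle\ju{u_h},\mu\rangle.
\]
Element-wise integration by parts on the first term, combined with the jump term $\langle\ju{v-I_hv},\lambda_h\rangle$, exposes the residuals $f+\nabla\cdot k_i\nabla u_{i,h}$, $\ju{k_i\partial u_{i,h}/\partial n_E}$, and $\lambda_h-k_i\partial u_{i,h}/\partial n$; standard scaled Cl\'ement approximation estimates then yield a bound by $\eta\cdot\norm{(v,\mu)}$. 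The stabilisation term is controlled by Cauchy--Schwarz against the $\eta_{E,\Gamma}$-contributions together with Lemma~\ref{lem:invest} and the $H^1$-stability of $I_h$. The jump contribution is handled by duality, $\langle\ju{u_h},\mu\rangle\leq \|\ju{u_h}\|_{\frac12,\Gamma}\,\|\mu\|_{-\frac12,\Gamma}$, combined with a local inverse inequality on the piecewise-polynomial trace yielding $\|\ju{u_h}\|_{\frac12,\Gamma}^2\lesssim \sum_{E}h_E^{-1}\|\ju{u_h}\|_{0,E}^2$, which after $k_i$-scaling is majorised by $\eta\cdot\norm{(v,\mu)}$.

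\textbf{Efficiency.} For $\eta_K$ and $\eta_{E,\Omega}$ I would use the classical Verf\"urth bubble-function arguments: test the strong form against appropriately scaled element/facet bubbles, integrate by parts, and apply inverse estimates to bound each by the local energy error plus $\osc_K(f)$. The flux part $(h_E/k_i)\|\lambda_h-k_i\partial u_{i,h}/\partial n\|_{0,E}^2$ of $\eta_{E,\Gamma}$ is exactly Lemma~\ref{lem:lowresidual} applied with $(v_h,\mu_h)=(u_h,\lambda_h)$. The jump part $(k_i/h_E)\|\ju{u_h}\|_{0,E}^2$ is reduced to the flux part via the explicit formula~\eqref{lamb}, $\beta\ju{u_h}=\me{k\partial u_h/\partial n}-\lambda_h$, which by~\eqref{convexcomb} is a convex combination of $k_1\partial u_{1,h}/\partial n-\lambda_h$ and $k_2\partial u_{2,h}/\partial n-\lambda_h$; the definition~\eqref{betaterm} of $\beta$ then gives that the $(k_i/h_E)$-scaling absorbs the $\beta^{-2}$ factor, producing a bound by the already-efficient flux contribution.

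\textbf{Main obstacle.} The most delicate point is the reliability estimate of $\langle\ju{u_h},\mu\rangle$: because $\mu\in(H^{\frac12}_{00}(\Gamma))'$ admits no direct $L^2$ control and $\ju{u_h}$ lives naturally on the intersection mesh $\Gh^\cap$ while $\eta$ uses the coarser subdomain meshes, one needs an inverse inequality compatible with the asymmetric $k_i$-scaling of the energy norm~\eqref{energynorm}, with constants independent of the material and mesh parameters.
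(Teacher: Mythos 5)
Your reliability argument is, step for step, the paper's: continuous stability to pick $(v,\mu)$, the consistency/Galerkin-orthogonality identity with the Cl\'ement interpolant $\clev$ (your version, obtained by subtracting the two problems, is the same statement written without invoking $\Sf(u,\lambda;\cdot,\cdot)=0$), elementwise integration by parts exposing the three residuals, Cauchy--Schwarz plus the discrete trace estimate and the $H^1$-stability of $\clev$ for the stabilisation term, and duality combined with the discrete inverse inequality $\Vert \ju{u_h}\Vert_{1/2,\Gamma}^2\lesssim\sum_E h_E^{-1}\Vert\ju{u_h}\Vert_{0,E}^2$ for $\langle\ju{u_h},\mu\rangle$; what you flag as the main obstacle is exactly the one nonstandard ingredient the paper imports from the literature. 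For efficiency, the paper likewise just invokes Lemma~\ref{lem:lowresidual} together with the standard Verf\"urth bubble-function bounds.

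The one place you go beyond the paper is the efficiency of the penalty part $\frac{k_i}{h_E}\Vert\ju{u_h}\Vert_{0,E}^2$ of $\eta_{E,\Gamma}$, and there your quantitative claim does not hold uniformly. Writing $r_j=k_j\frac{\partial u_{j,h}}{\partial n}-\lambda_h$, the identity \eqref{lamb} with \eqref{betaterm} gives
\begin{equation*}
\ju{u_h}=\beta^{-1}(\alpha_1 r_1+\alpha_2 r_2)=\frac{\alpha h_1}{k_1}\,r_1+\frac{\alpha h_2}{k_2}\,r_2 ,
\end{equation*}
so the own-side contribution is indeed absorbed, $\frac{k_1}{h_1}\big(\frac{\alpha h_1}{k_1}\big)^2\Vert r_1\Vert_{0,E}^2=\alpha^2\frac{h_1}{k_1}\Vert r_1\Vert_{0,E}^2$, but the cross contribution is $\alpha^2\,\frac{k_1h_2}{k_2h_1}\cdot\frac{h_2}{k_2}\Vert r_2\Vert_{0,E}^2$, and the ratio $k_1h_2/(k_2h_1)$ (respectively its reciprocal on the other side) is not bounded independently of the material and mesh parameters; e.g.\ $k_1\gg k_2$ with $h_1\sim h_2$ ruins it. Hence ``the $(k_i/h_E)$-scaling absorbs the $\beta^{-2}$ factor'' is only true termwise on the matching side, and as stated your argument yields an efficiency constant for the jump indicator that degenerates with $\max\{k_1h_2/(k_2h_1),\,k_2h_1/(k_1h_2)\}$. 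The paper is silent on this term (its efficiency proof is a one-line reference to Lemma~\ref{lem:lowresidual} and standard lower bounds), so you have not contradicted its proof, but if \eqref{effi} is to hold with parameter-independent constants you need either an additional assumption of the type $k_1h_2\sim k_2h_1$ or a different argument for the $\ju{u_h}$-part of the estimator.
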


\begin{proof}
    The continuous stability estimate of Theorem \ref{contstab} guarantees the existence of
    a pair $(v,\mu) \in V \times Q$, with $   \norm{(v,\mu )}=1$, that satisfies
    \begin{equation}\label{th4first}
        \norm{(u-u_h,\lambda-\lambda_h)} \lesssim \Bf(u-u_h, \lambda-\lambda_h; v, \mu).
    \end{equation}
   
    Let $\clev \in V_h$ be the Cl\'ement interpolant of $v \in V$.
    The stabilised method is consistent, thus
    \[
        \Bf_h(u-u_h,\lambda-\lambda_h; \clev,0) = 0 .
    \]
    Therefore, we can write
    \[
        \Bf(u-u_h,\lambda-\lambda_h; v,\mu) = \Bf(u-u_h, \lambda-\lambda_h; v-\clev, \mu) +\alpha \Sf(u_h,\lambda_h; \clev, 0).
    \]
 After integration by parts, the first term yields
   \begin{equation} 
   \label{apostderiv}
   \begin{aligned}
        &\Bf(u-u_h,\lambda-\lambda_h; v-\clev, \mu) \\
        &= \Lf(v-\clev) - \Bf(u_h,\lambda_h; v-\clev, \mu) \\
        &= \sum_{i=1}^2\Bigg[ \sum_{K \in \Ch^i} (\nabla \cdot k_i \nabla u_{i,h} + f_i, v_i-\clev_i)_K +  \sum_{E \in \Eh^i} \left(  \ju{k_i\frac{\partial u_{i,h}}{\partial n}} , v_i-\clev_i\right)_E\\
        &\phantom{=}~ +   \sum_{E \in \Gh^i} \left(\lambda_h - k_i \frac{\partial u_{i,h}}{\partial n}, v_i - \clev_i\right)_E \Bigg]-\langle \ju{u_h},\mu\rangle.
    \end{aligned}
    \end{equation}
    On the other hand, for the Cl\'ement interpolant it holds
    \begin{equation}\label{cle}
   k_i \Vert \nabla \clev_i \Vert_{0, \Omega_i}^2+ \sum_{K \in \Ch^i} \frac{k_i}{h_K^2} \Vert v_i-\clev_i\Vert _{0,K }^2+   \sum_{E \in \Eh^i}\frac{k_i}{h_E} \Vert v_i-\clev_i  \Vert_{0,E}^2 \lesssim k_i\Vert \nabla v _i\Vert_{0, \Omega_i}^2 .
    \end{equation}
    For the first three terms in \eqref{apostderiv}, we thus get
    \begin{equation}
      \begin{aligned}
        &  \sum_{K \in \Ch^i} (\nabla \cdot k_i \nabla u_{i,h} + f_i, v_i-\clev_i)_K +  \sum_{E \in \Eh^i} \left(  \ju{k_i\frac{\partial u_{i,h}}{\partial n}} , v_i-\clev_i\right)_E\\
        &\phantom{=}~ +  \sum_{E \in \Gh^i} \left(\lambda_h - k_i \frac{\partial u_{i,h}}{\partial n}, v_i - \clev_i\right)_E \lesssim \eta  \, \Vert k_i\nabla v _i\Vert_{0, \Omega_i}\lesssim \eta.
    \end{aligned}
    \end{equation}
     The last term in \eqref{apostderiv} is estimated using the  following discrete inverse inequality for the $H^{1/2}_{00}(\Gamma) $ norm  (cf. \cite{AK,cf}) 
     \[
     \Vert v_h \Vert_{1/2, \Gamma}^2 \lesssim   \sum_{E \in \Gh^i}h_E^{-1}   \Vert v_h \Vert_{0,E}^2 \quad \forall v_h\in V_h,
     \]
     viz.
     \[
     -\langle \ju{u_h},\mu\rangle \leq  \Vert \ju{u_h}\Vert_{1/2, \Gamma}   \Vert \mu\Vert_{-1/2, \Gamma}
     \lesssim \sum_{i=1}^2 \Bigg(  \sum_{E \in \Gh^i}\frac{k_i}{h_E} \Vert \ju{u_h}  \Vert_{0,E}^2\Bigg)^{1/2}
     \sqrt{k_i} 
       \Vert \mu \Vert_{-1/2, \Gamma}.
     \]
     On the other hand, from the Cauchy--Schwarz and  the discrete trace inequalities and from \eqref{cle}, it follows that
     \begin{equation}\label{th4last}
      \Sf(u_h,\lambda_h; \clev, 0)\lesssim \eta. 
     \end{equation}
     The upper bound \eqref{reli}  can now be established by joining the above estimates.
     
     The lower bound \eqref{effi} follows from Lemma \ref{lem:lowresidual} together with standard lower bounds, cf.~\cite{Verfurth}.
     \qed
    \end{proof}

     We end this section by reiterating that the purpose of the mixed stabilised
     formulation is to perform the error analysis. We advocate the use of Nitsche's
     formulation for computations and note that substituting the discrete
     Lagrange multiplier \eqref{lamb} in the error indicators, we obtain for
     $E\in\Gh^1$
     \begin{equation}
     \label{nitscheapost1}
     \frac{h_E}{k_1} \left\| \lambda_h - k_1\frac{\partial u_{1,h}}{\partial n}\right\|_{0,E}^2
     =   \frac{h_E}{k_1} \left\| \alpha_2  \ju{k \frac{\partial u_h}{\partial n}}  + \beta \ju{u_h} \right\|_{0,E}^2
     \end{equation}
     and for $E\in\Gh^2$ 
     \begin{equation}
          \label{nitscheapost2}
     \frac{h_E}{k_2} \left\| \lambda_h - k_2 \frac{\partial u_{2,h}}{\partial n}\right\|_{0,E}^2=
          \frac{h_E}{k_2} \left\| \alpha_1  \ju{k \frac{\partial u_h}{\partial n}}   -\beta \ju{u_h} \right\|_{0,E}^2.
     \end{equation}

     \begin{remark}[Method II]
The local estimators $\eta_K, \eta_{E,\Omega} $ and $\eta_{E,\Gamma}$ are defined through  \eqref{apost1}--\eqref{apost3} and the estimates \eqref{reli} and \eqref{effi} hold true  in the norm \eqref{equinorm}. After substituting the discrete Lagrange multiplier
\[
\lambda_h = k_2  \frac{\partial u_{2,h}}{\partial n}-\alpha^{-1}\frac{k_2}{h_2} \ju{u_h}
\]
 into the error indicators, we obtain for the corresponding Nitsche's formulation we obtain  we obtain for
     $E\in\Gh^1$
     \begin{equation}
     \frac{h_E}{k_1} \left\| \lambda_h - k_1\frac{\partial u_{1,h}}{\partial n}\right\|_{0,E}^2
     =   \frac{h_E}{k_1} \left\|   \ju{k \frac{\partial u_h}{\partial n}}  + \alpha^{-1} \frac{k_2}{h_2}  \ju{u_h} \right\|_{0,E}^2
     \end{equation}
     and for $E\in\Gh^2$ 
     \begin{equation}
     \frac{h_E}{k_2} \left\| \lambda_h - k_2\frac{\partial u_{2,h}}{\partial n}\right\|_{0,E}^2=
         \alpha^{-2} \frac{k_2}{h_E} \left\| \ju{u_h} \right\|_{0,E}^2.
     \end{equation}
 \end{remark}

\begin{remark}[Method III]
    Once again the local estimators for the stabilised method are defined as in  \eqref{apost1}--\eqref{apost3} and the a posteriori estimates \eqref{reli} and \eqref{effi} hold true.
    In the Nitsche's formulation, the error indicators depending on the Lagrange multiplier are given by \eqref{nitscheapost1} and \eqref{nitscheapost2}.
\end{remark}

%

\section{Numerical results}

We experiment with the proposed method by solving the domain decomposition
problem adaptively with $\Omega_1 = (0,1)^2$, $\Omega_2 = (1,2) \times (0,1)$,
$f = 1$, $\alpha = 10^{-2}$ and linear elements. After each solution we mark a triangle $K \in \Ch^i$,
$i=1,2$, for refinement if it satisfies $\mathcal{E}_K > \theta
\max_{K^\prime \in \Ch^1 \cup \Ch^2} \mathcal{E}_{K^\prime}$ where $\theta =
\tfrac{1}{\sqrt{2}}$ and
\begin{align*}
    \mathcal{E}_K^2 &= \frac{h_K^2}{k_i} \| \nabla \cdot k_i \nabla u_{i,h} + f \|_{0,K}^2 + \frac12 \sum_{E \subset \partial K \setminus \Gamma} \frac{h_E}{k_i} \left \| \left\llbracket k_i \frac{\partial u_{i,h}}{\partial n} \right\rrbracket \right\|_{0,E}^2 \qquad \\
                    &\qquad + \sum_{E \subset \partial K \cap \Gamma} \left\{\frac{h_E}{k_i} \left\| \lambda_h - k_i \frac{\partial u_{i,h}}{\partial n} \right\|_{0,E}^2+\frac{k_i}{h_E} \Vert \ju{u_h}\Vert_{0,E}^2\right\} \quad \forall K \in \Ch^i.
\end{align*}
The set of marked elements is refined using the red-green-blue strategy,
see e.g.~Bartels~\cite{bartels2016numerical}. 

Changing the material parameters from $(k_1,k_2)=(1,1)$ to $(k_1,k_2)=(10,0.1)$, and
finally to $(k_1,k_2) = (0.1, 10)$ produces adaptive meshes where the domain
with a smaller material parameter receives more elements, see
Figures~\ref{fig:1}--\ref{fig:3}. This is  in accordance with results on
adaptive methods for linear elastic contact problems,
see e.g.~Wohlmuth~\cite{wohlmuth2011variationally} where it is demonstrated
that softer the material, more the respective domain is refined.

Next we solve the domain decomposition problem in an L-shaped domain with $\Omega_1 = (0,1)^2$, $\Omega_2 = (1,2) \times (0,2)$ and $k_1=k_2=1$.
The resulting sequence of meshes is depicted in Figure~\ref{fig:lshaped} and the
global error estimator as a function of the number of degrees-of-freedom $N$ is given in
Figure~\ref{fig:convergence}. Note that the exact solution is in $H^{5/3-\varepsilon}$,~$\varepsilon>0$,
in the neighbourhood of the reentrant corner which limits the convergence rate
of uniform refinements to $O(N^{-1/3})$.

We finally remark that Methods II and III yield very similar numerical results.

%

\begin{figure}[h!]
    \includegraphics[width=0.49\textwidth]{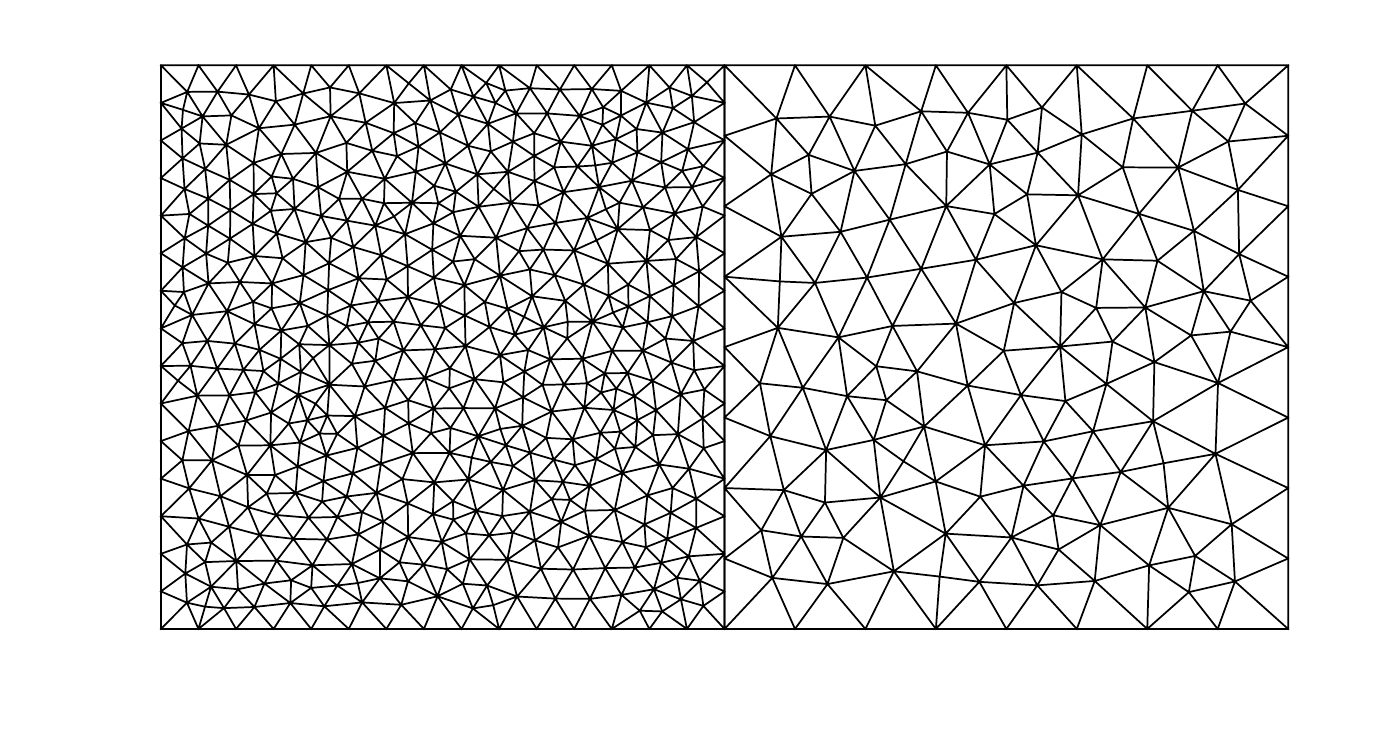}
    \includegraphics[width=0.49\textwidth]{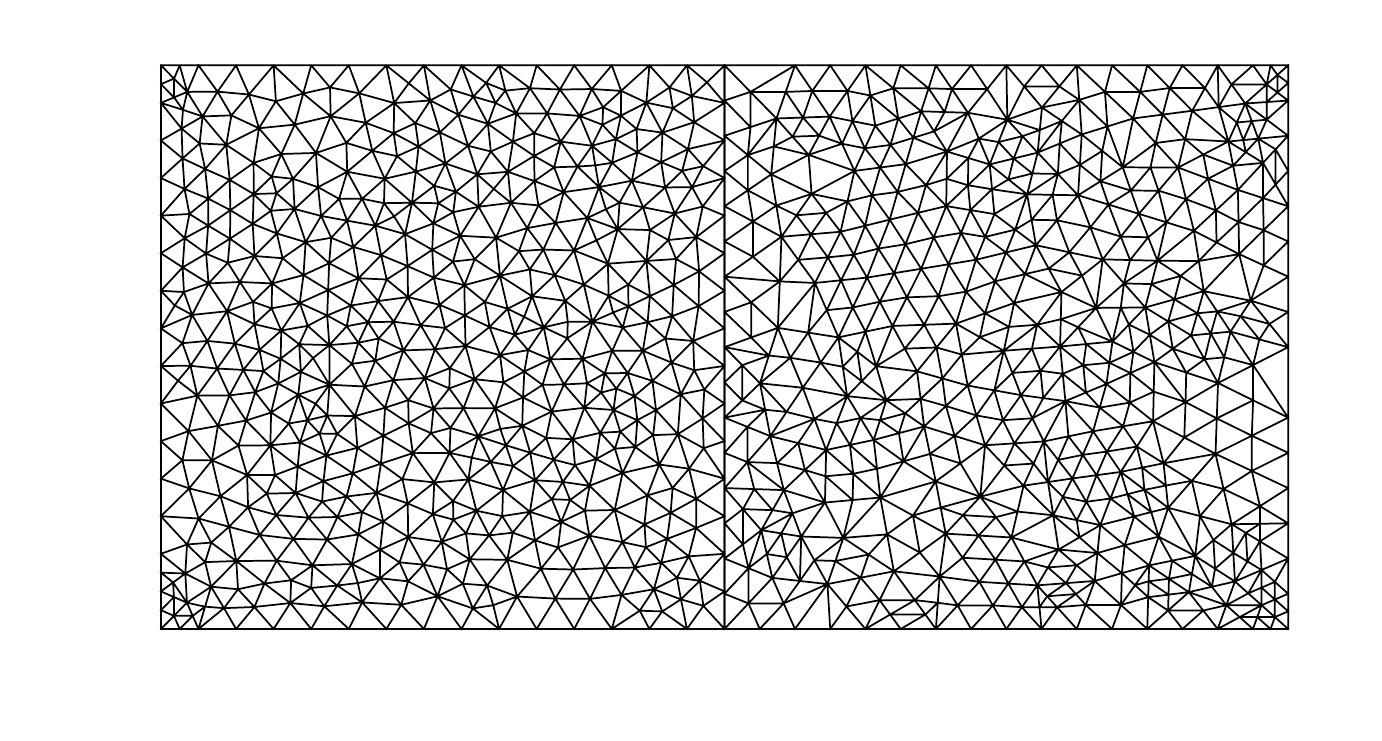}
    \includegraphics[width=0.49\textwidth]{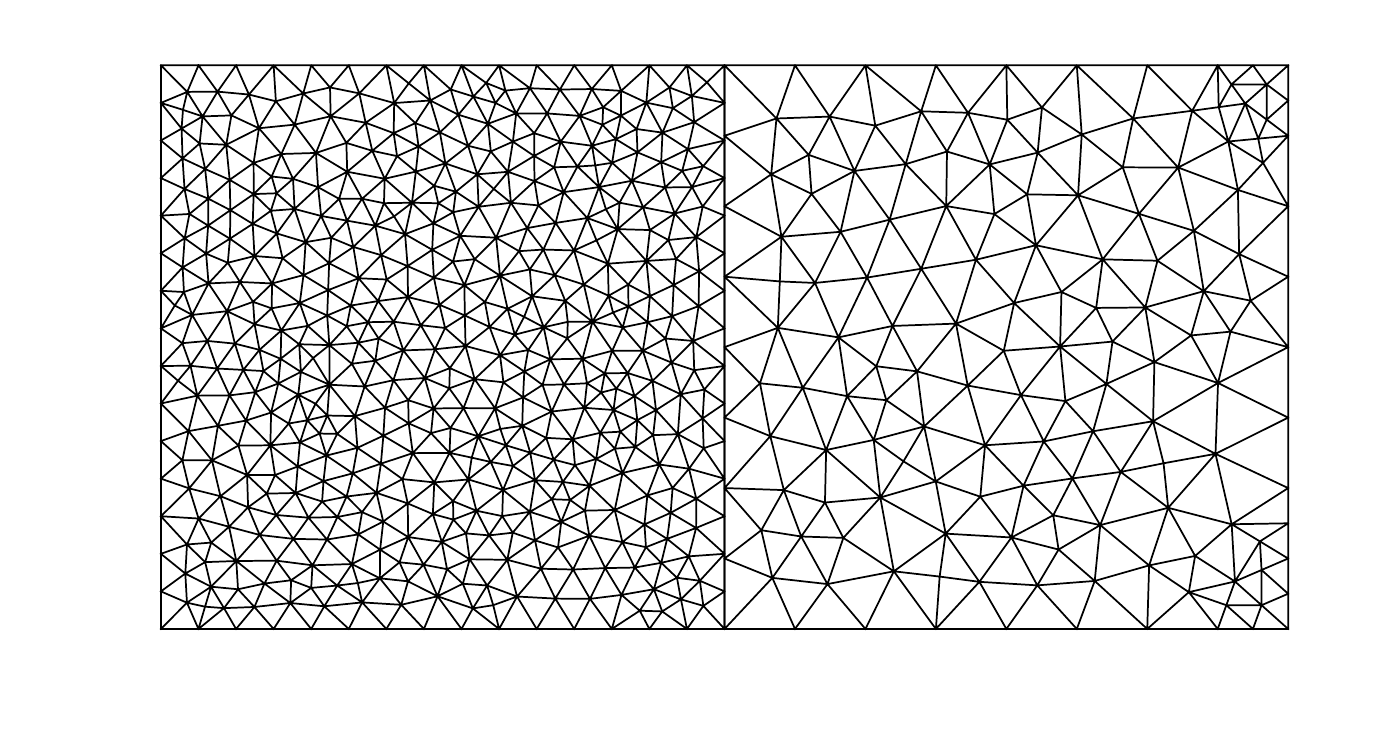}
    \includegraphics[width=0.49\textwidth]{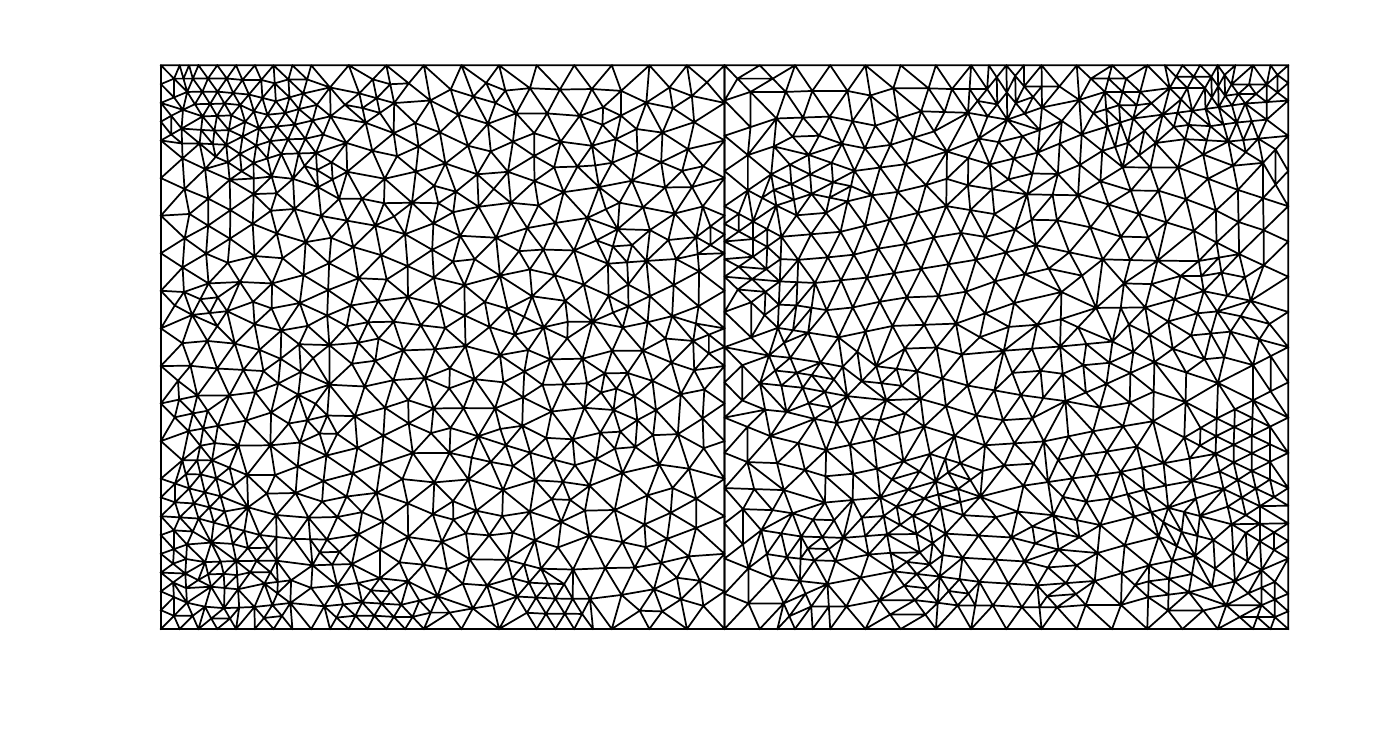}
    \includegraphics[width=0.49\textwidth]{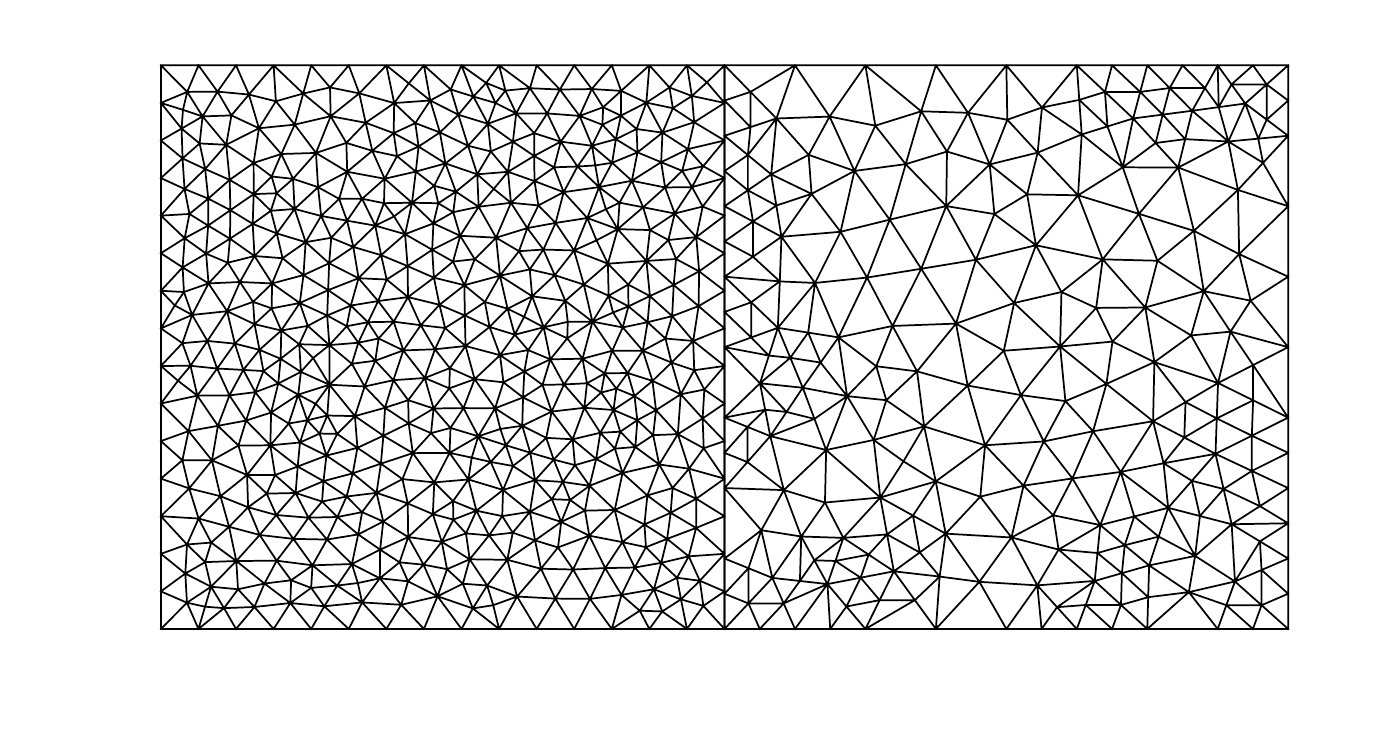}
    \includegraphics[width=0.49\textwidth]{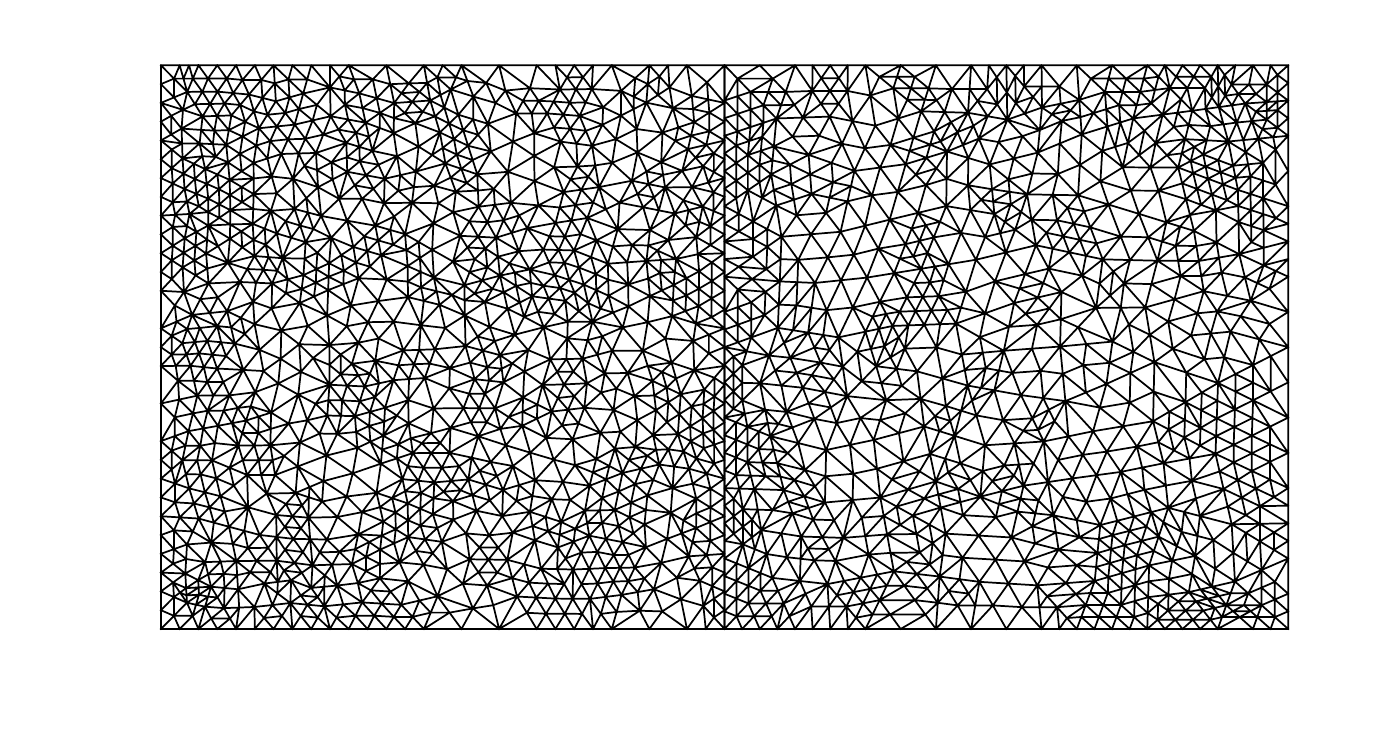}
    \includegraphics[width=0.49\textwidth]{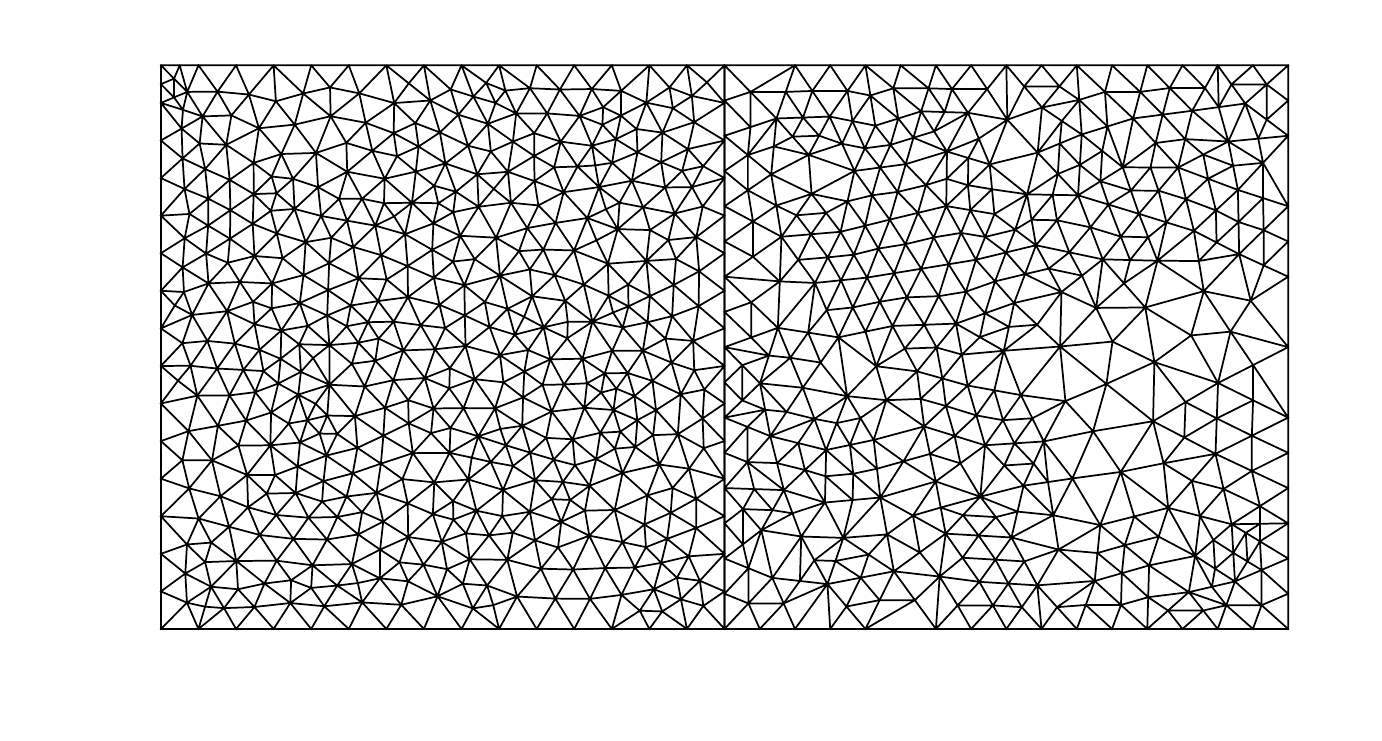}
    \includegraphics[width=0.49\textwidth]{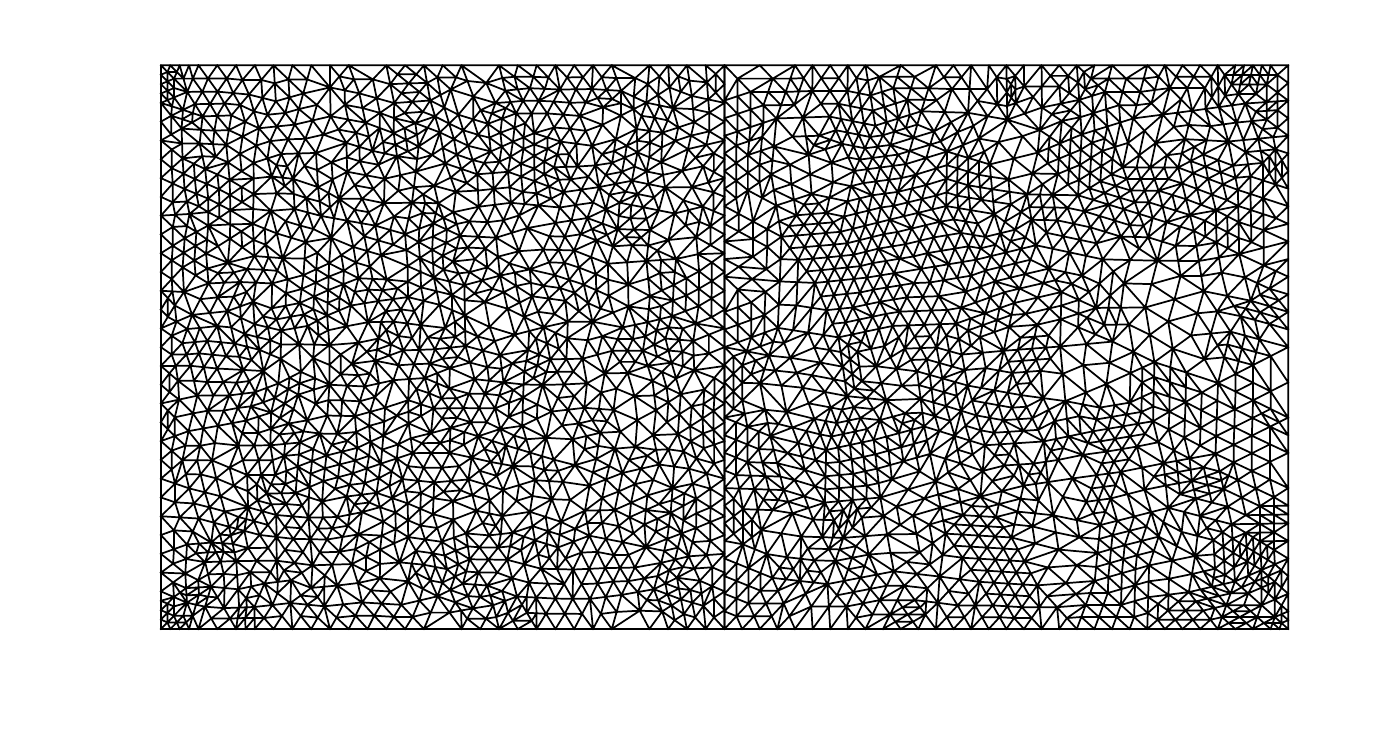}
    \caption{The sequence of adaptive meshes with $k_1=k_2=1$.}
    \label{fig:1}
\end{figure}

\begin{figure}[h!]
    \includegraphics[width=0.49\textwidth]{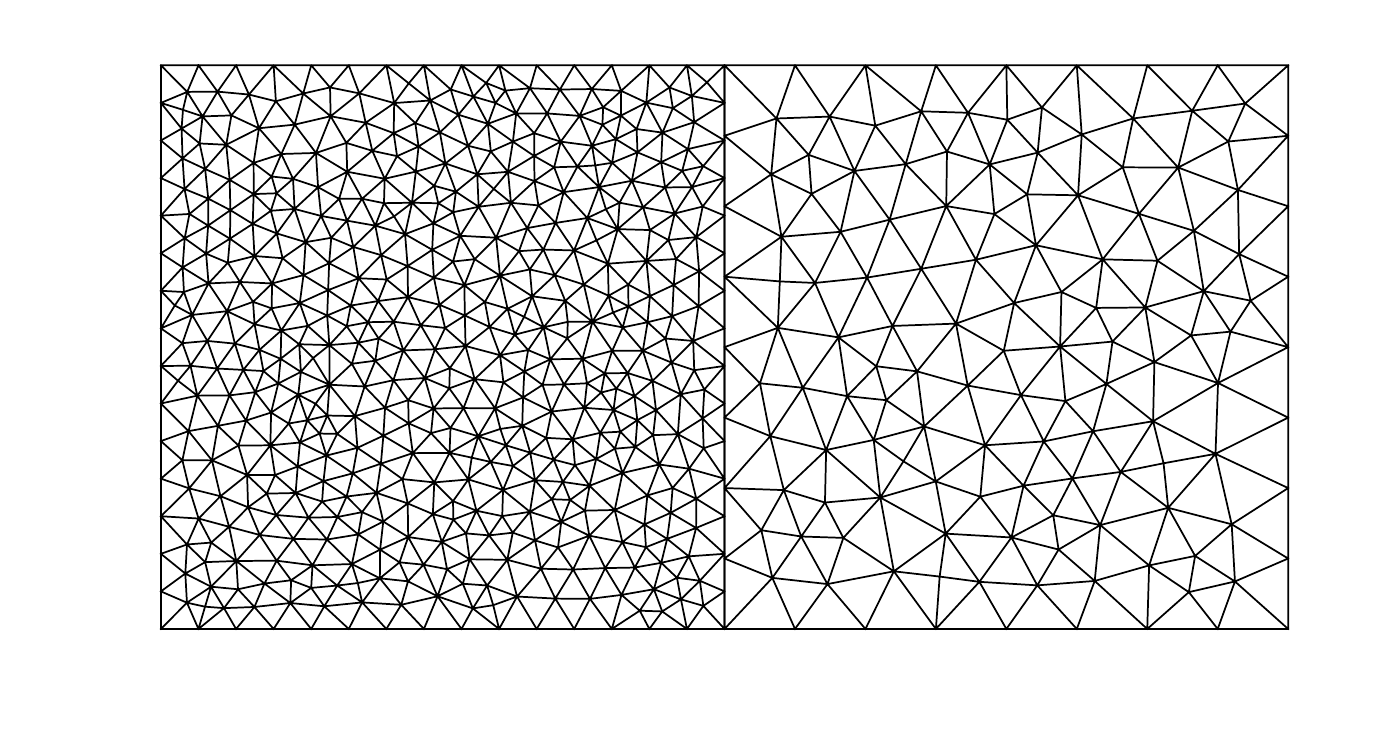}
    \includegraphics[width=0.49\textwidth]{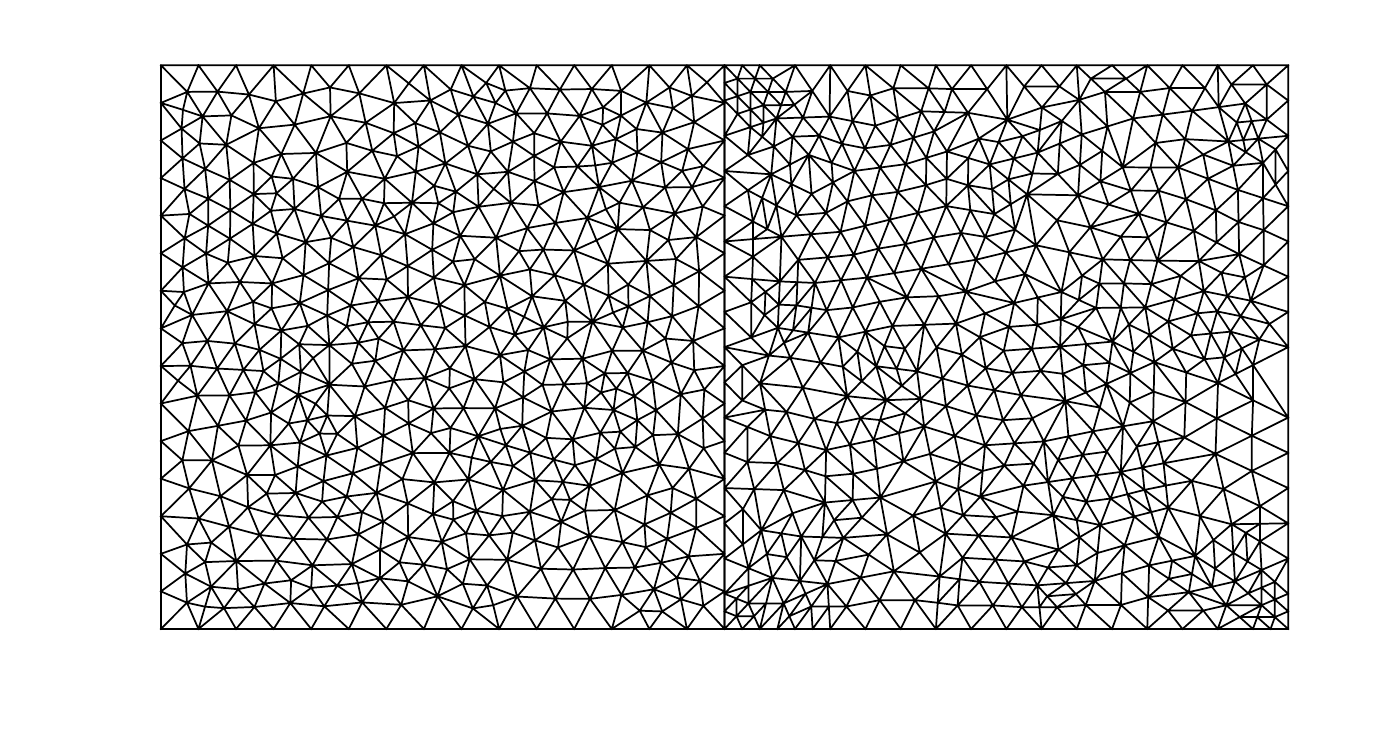}
    \includegraphics[width=0.49\textwidth]{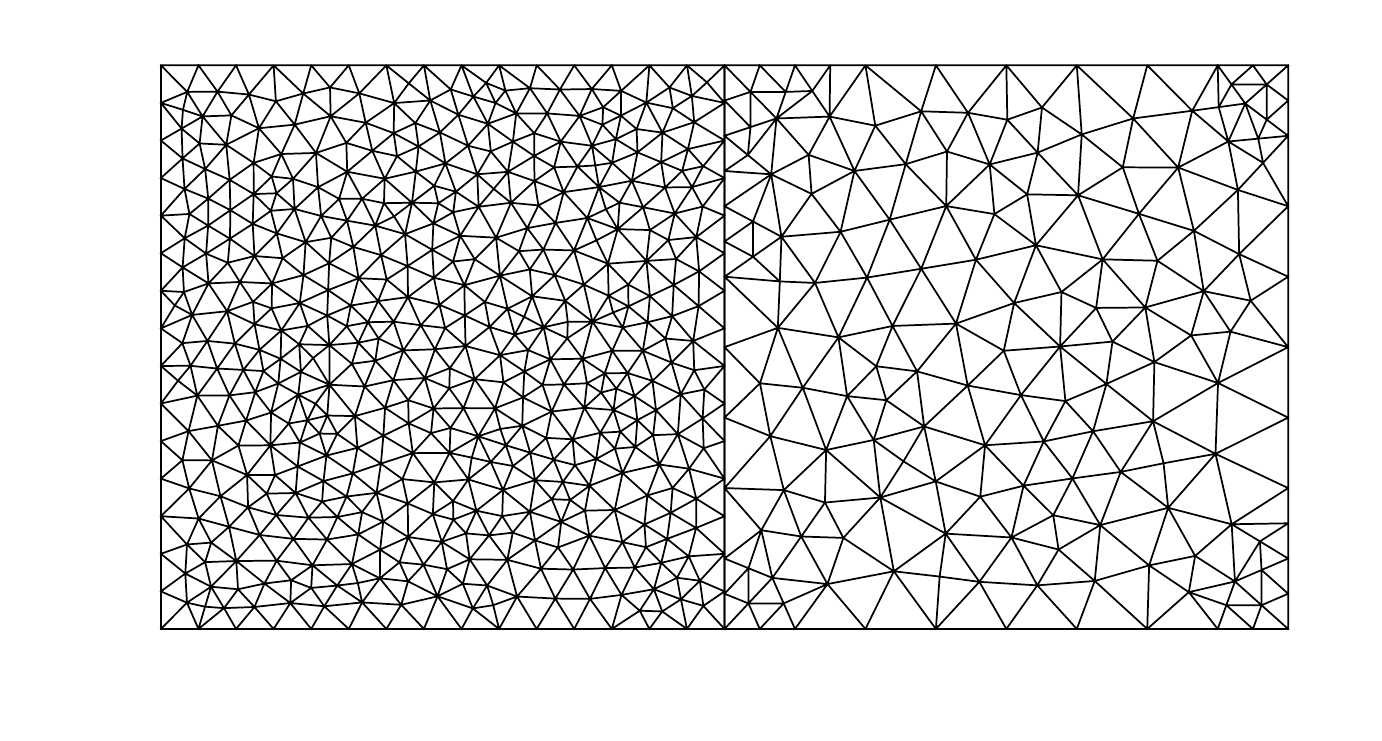}
    \includegraphics[width=0.49\textwidth]{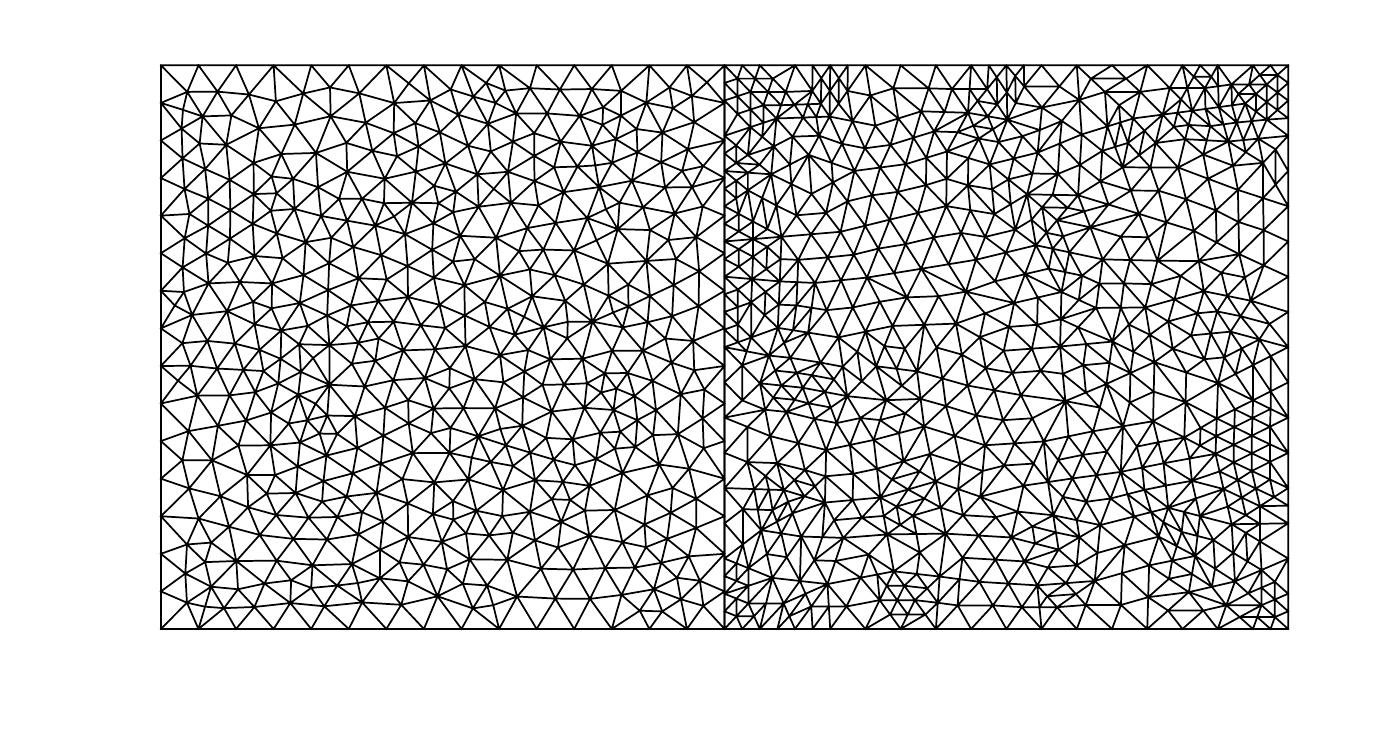}
    \includegraphics[width=0.49\textwidth]{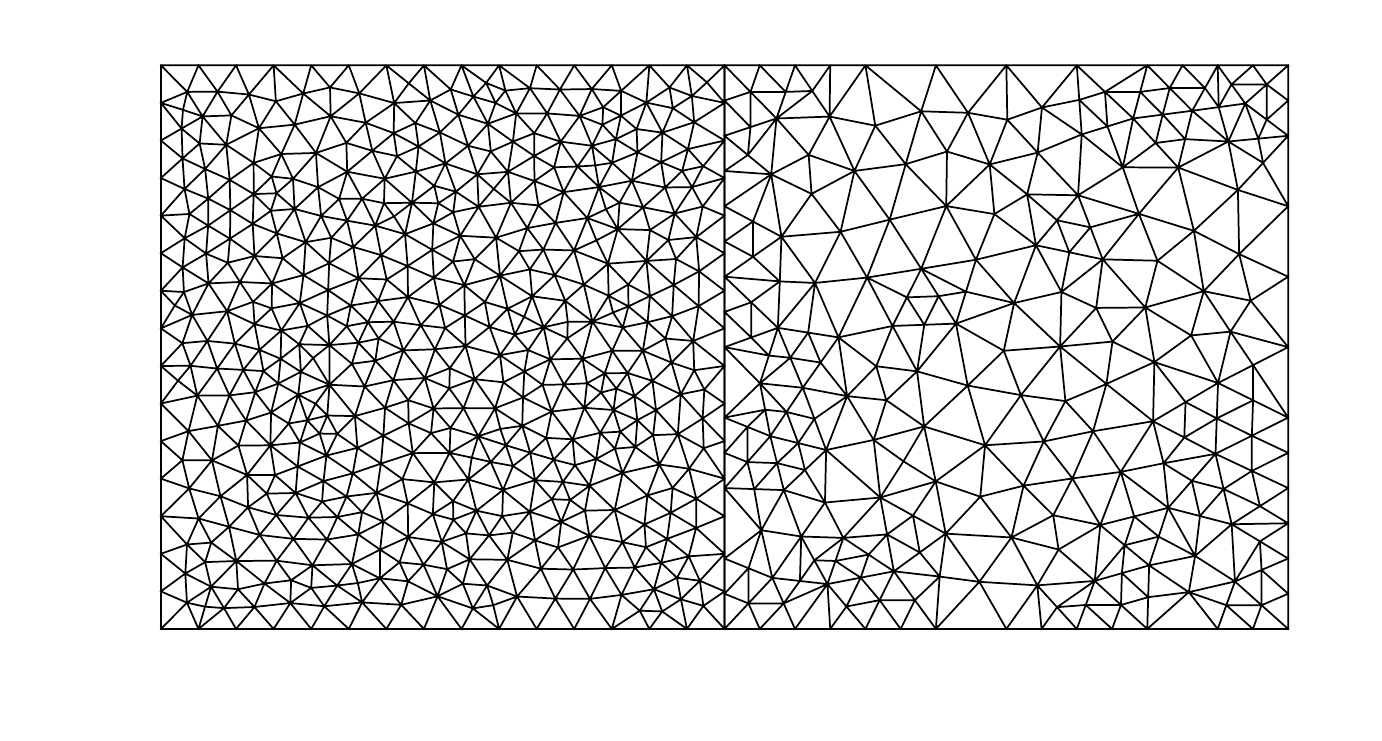}
    \includegraphics[width=0.49\textwidth]{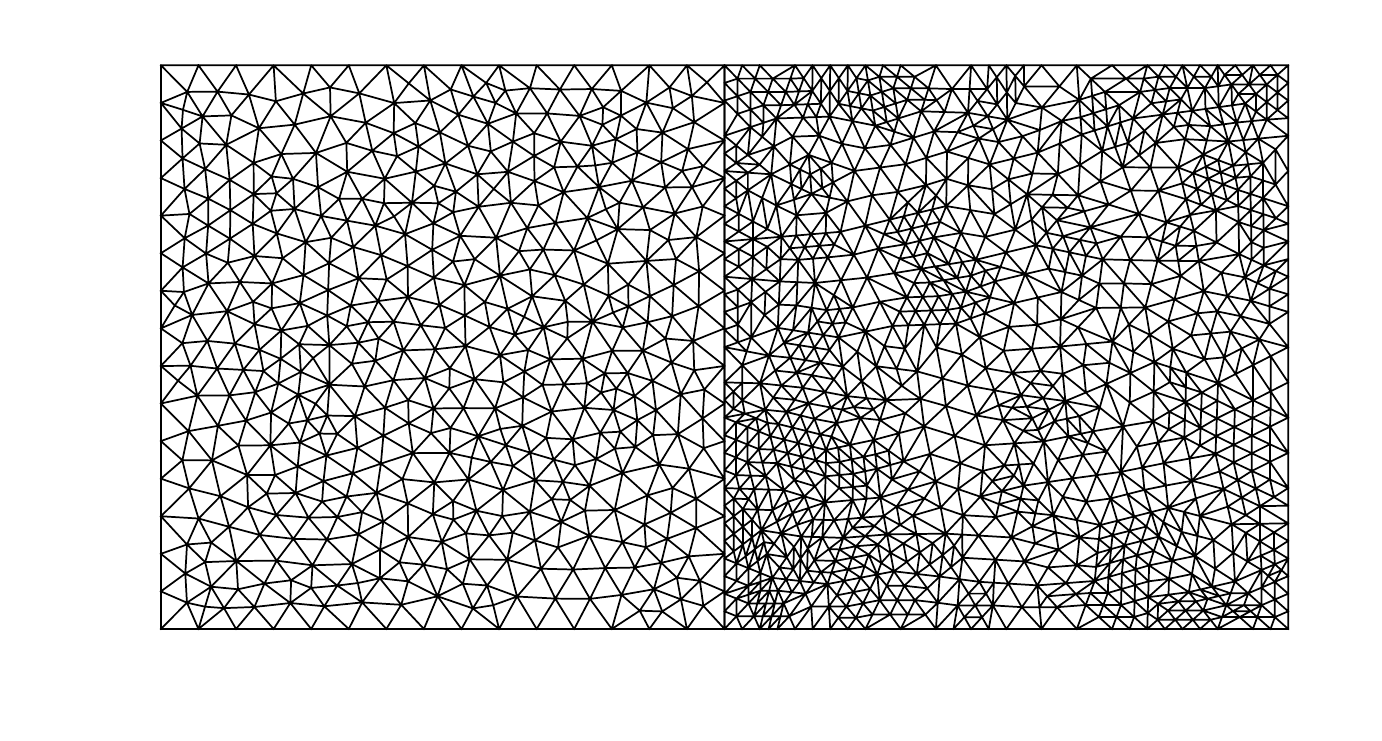}
    \includegraphics[width=0.49\textwidth]{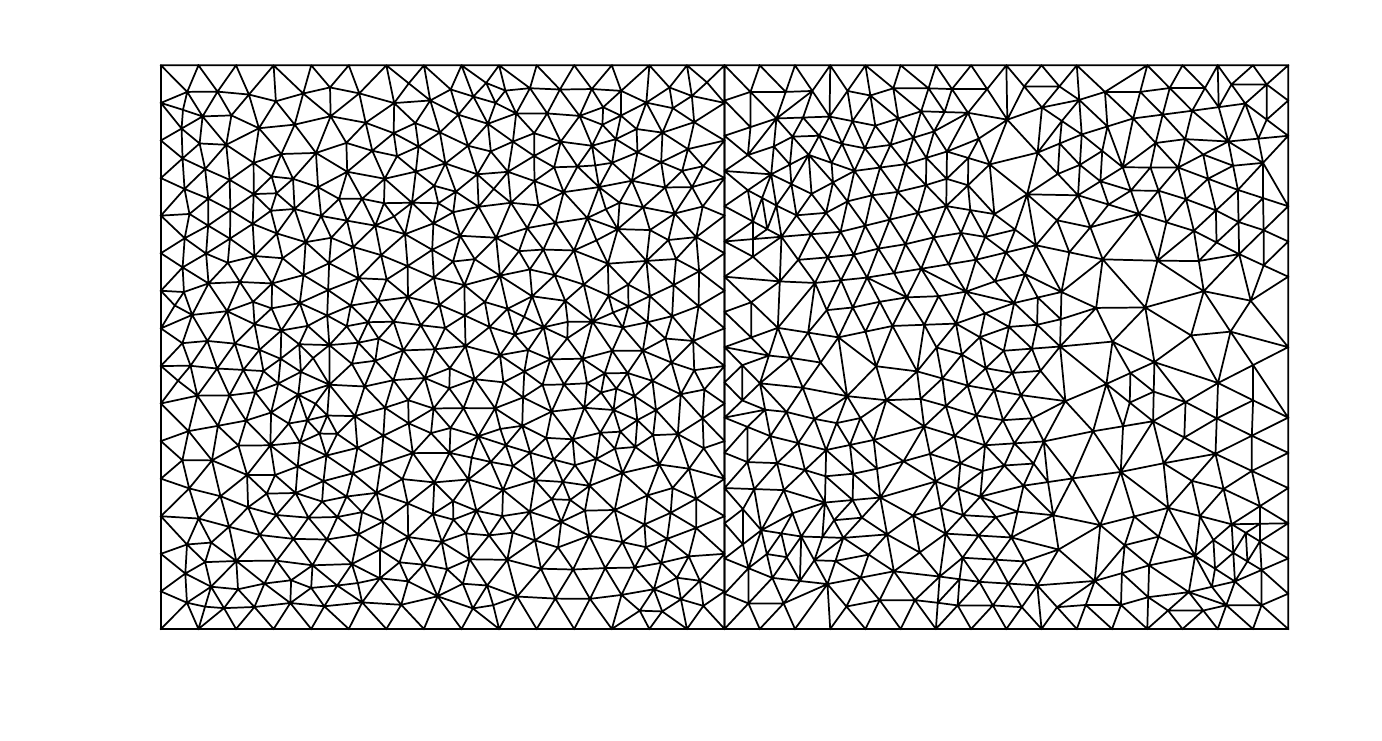}
    \includegraphics[width=0.49\textwidth]{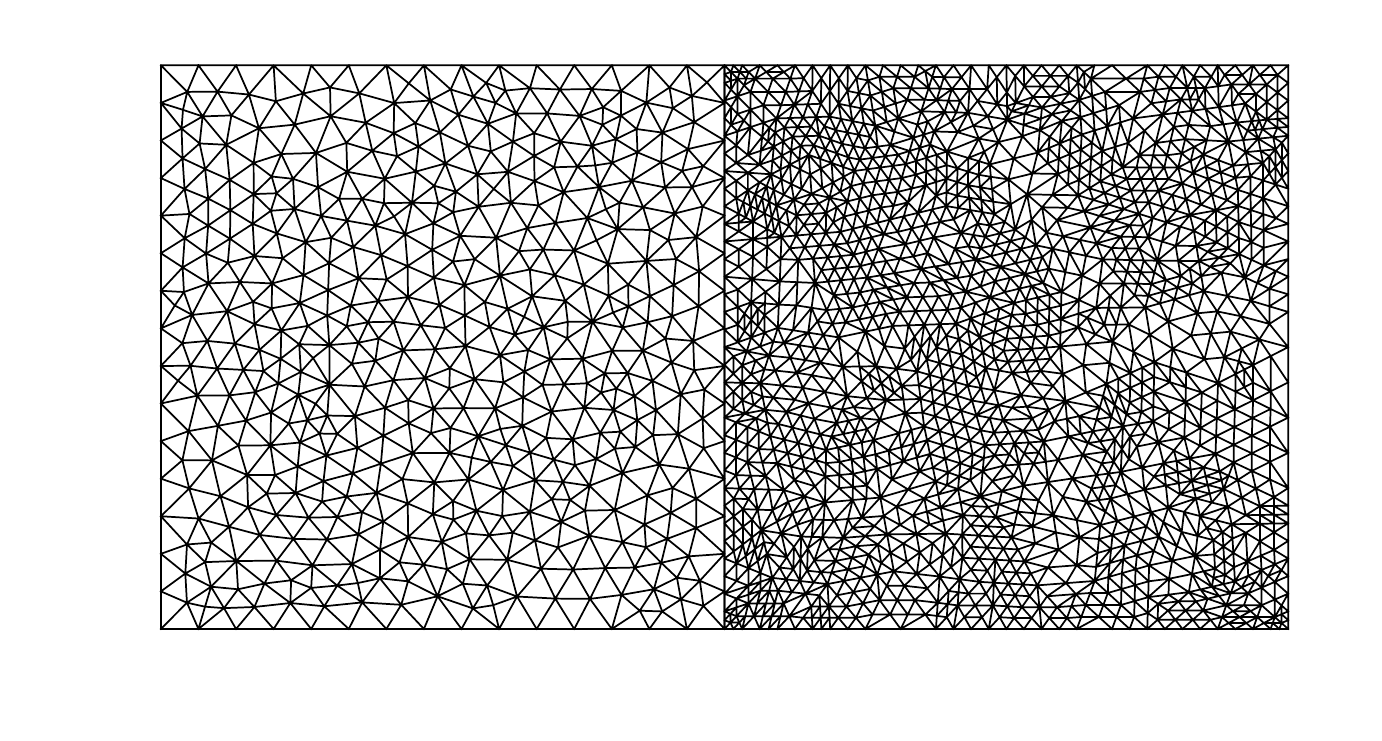}
    \caption{The sequence of adaptive meshes with $k_1=10$ and $k_2=0.1$.}
    \label{fig:2}
\end{figure}

\begin{figure}[h!]
    \includegraphics[width=0.49\textwidth]{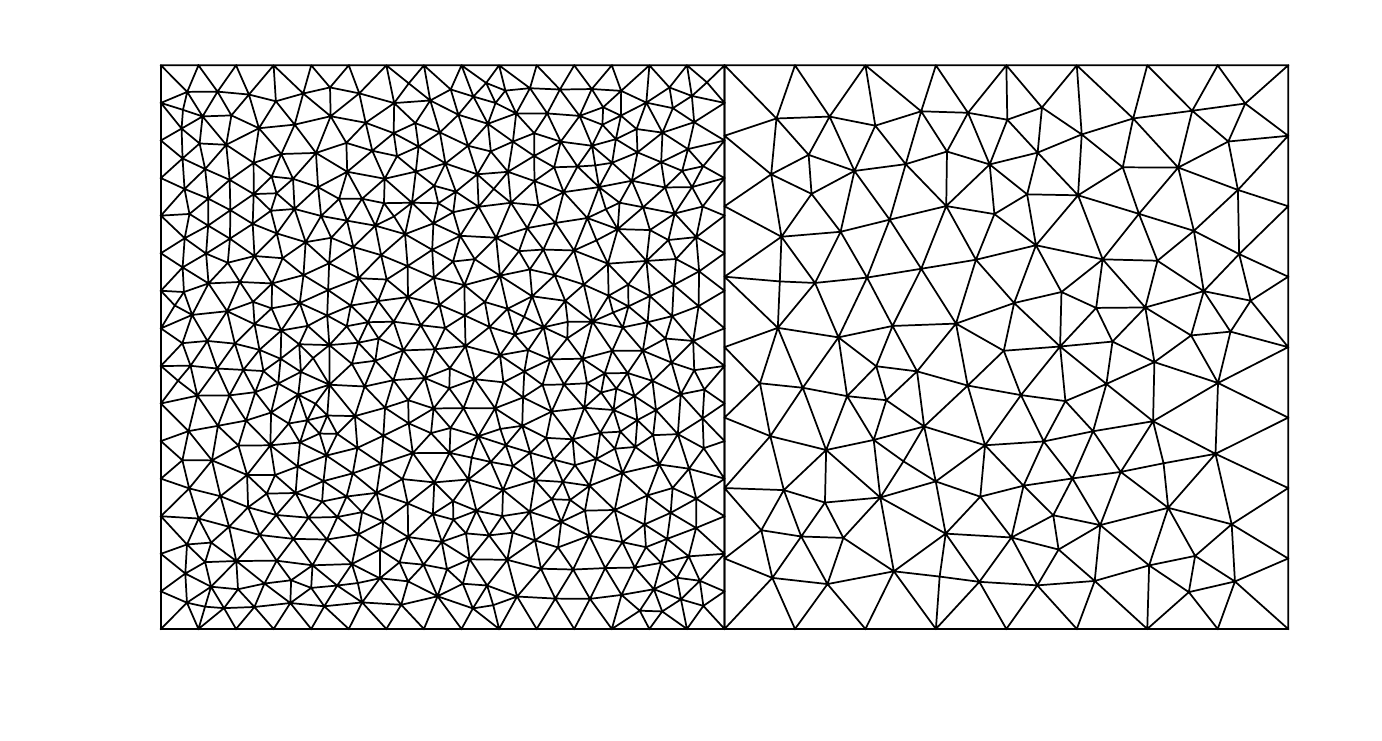}
    \includegraphics[width=0.49\textwidth]{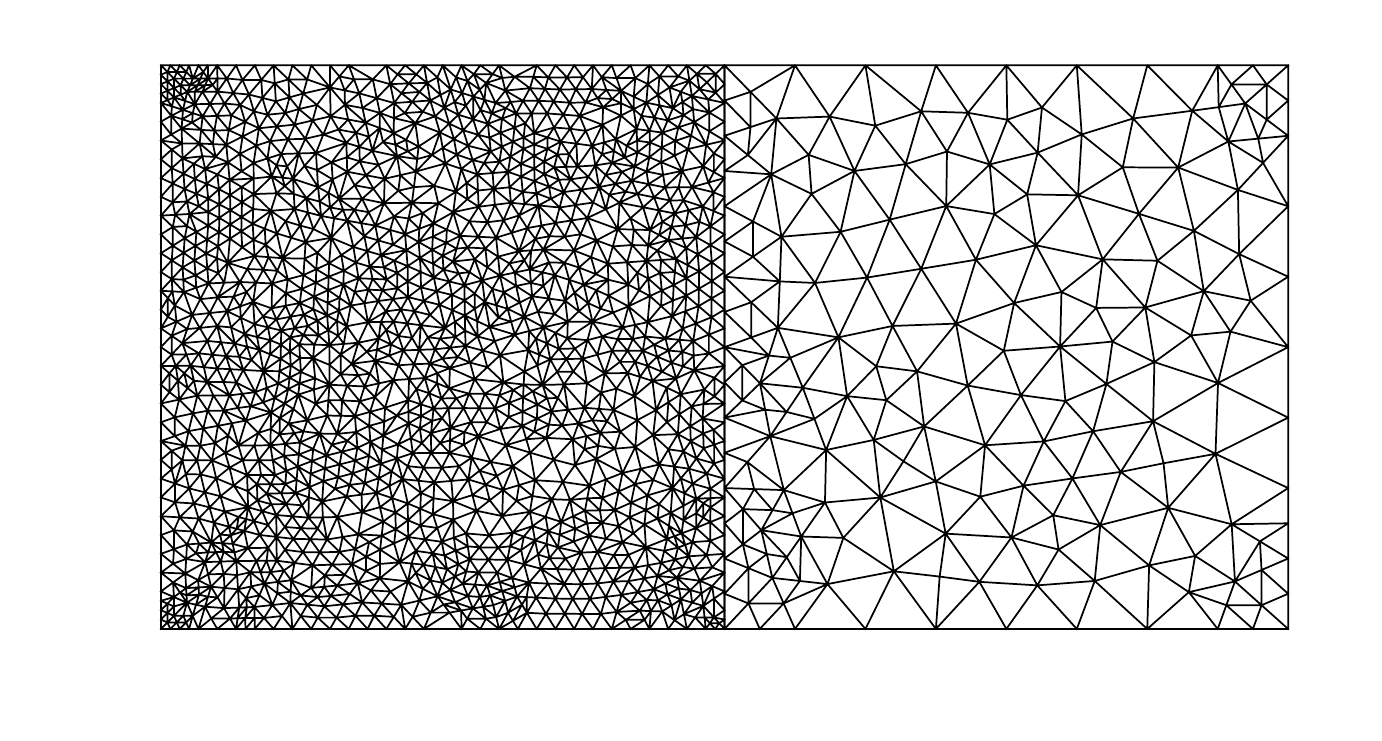}
    \includegraphics[width=0.49\textwidth]{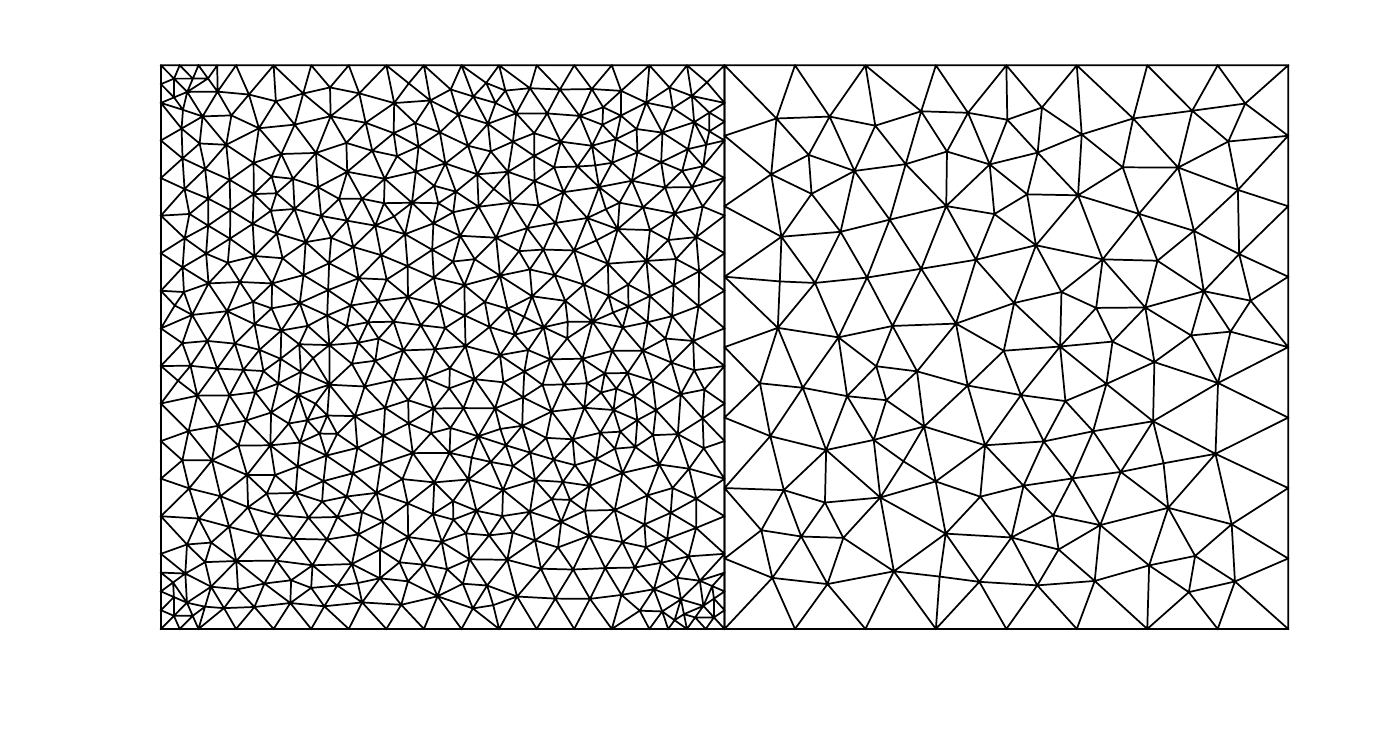}
    \includegraphics[width=0.49\textwidth]{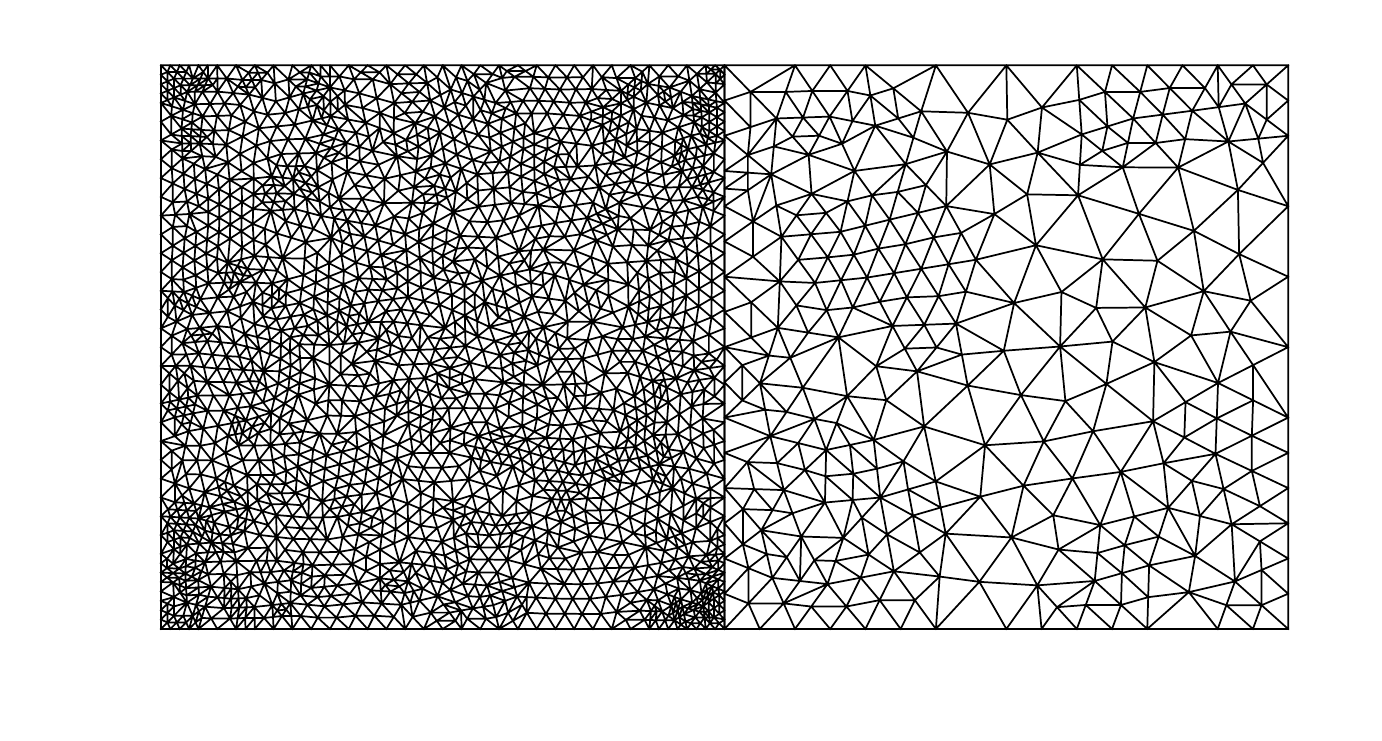}
    \includegraphics[width=0.49\textwidth]{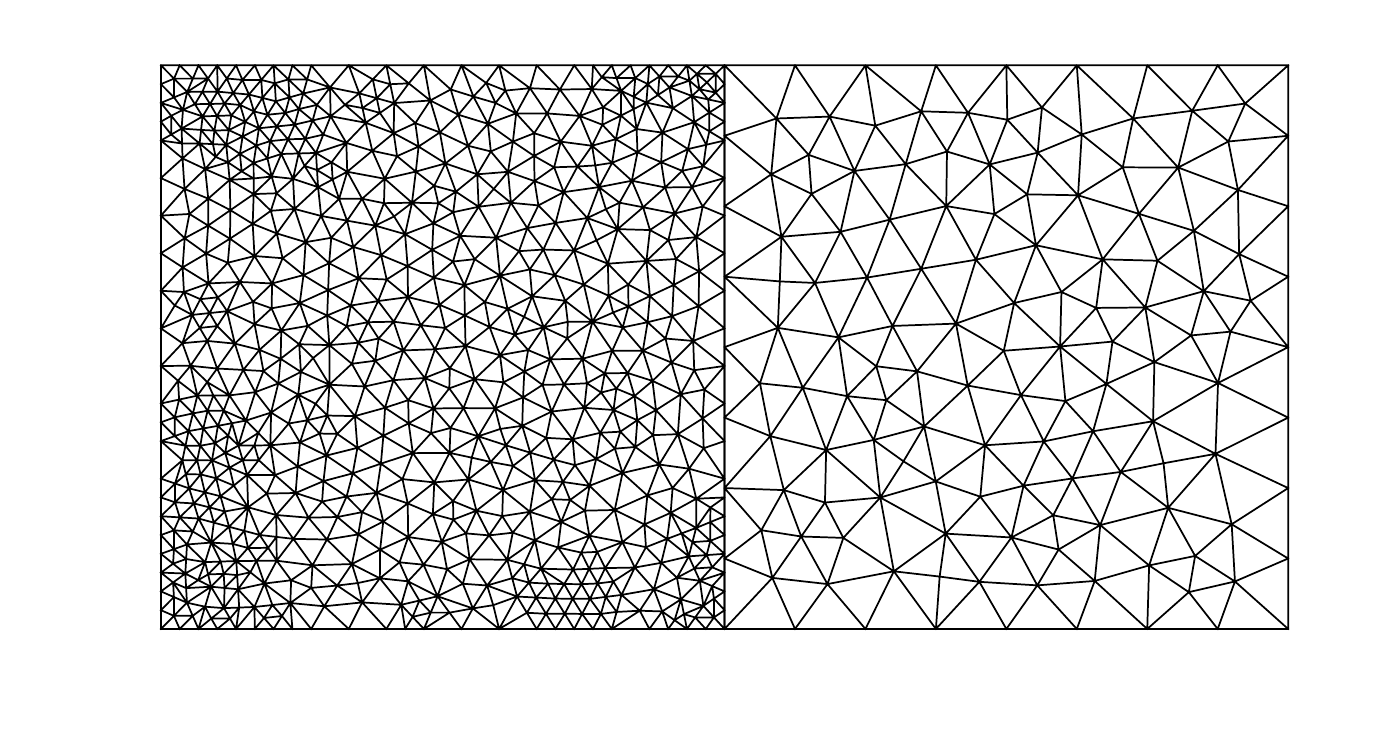}
    \includegraphics[width=0.49\textwidth]{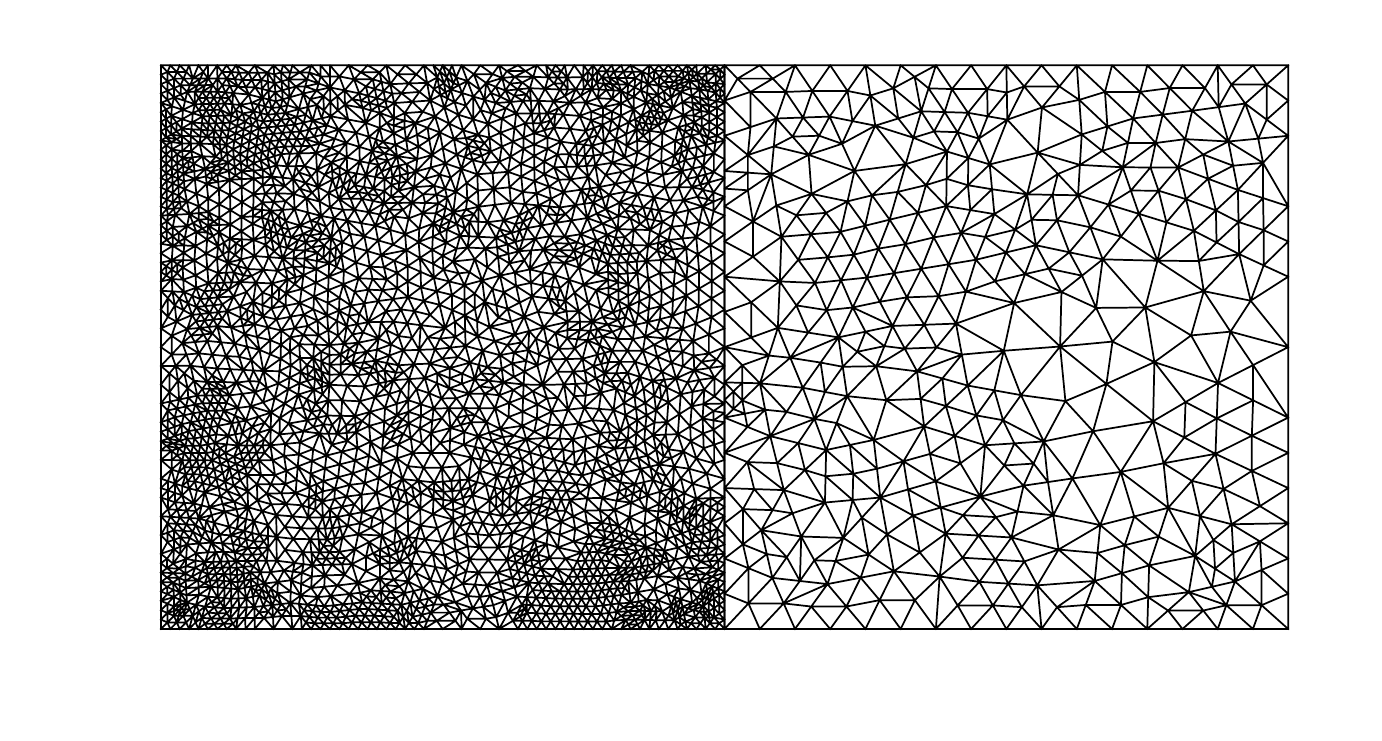}
    \includegraphics[width=0.49\textwidth]{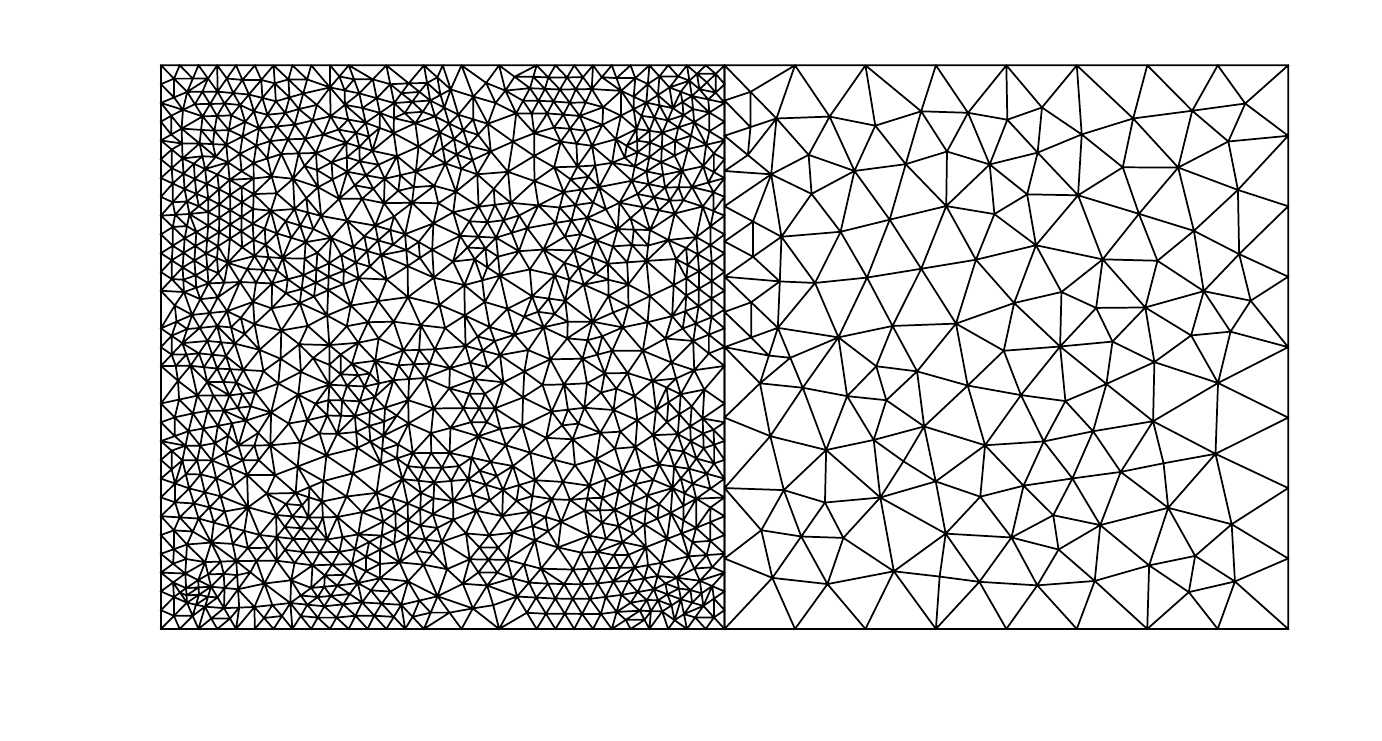}
    \includegraphics[width=0.49\textwidth]{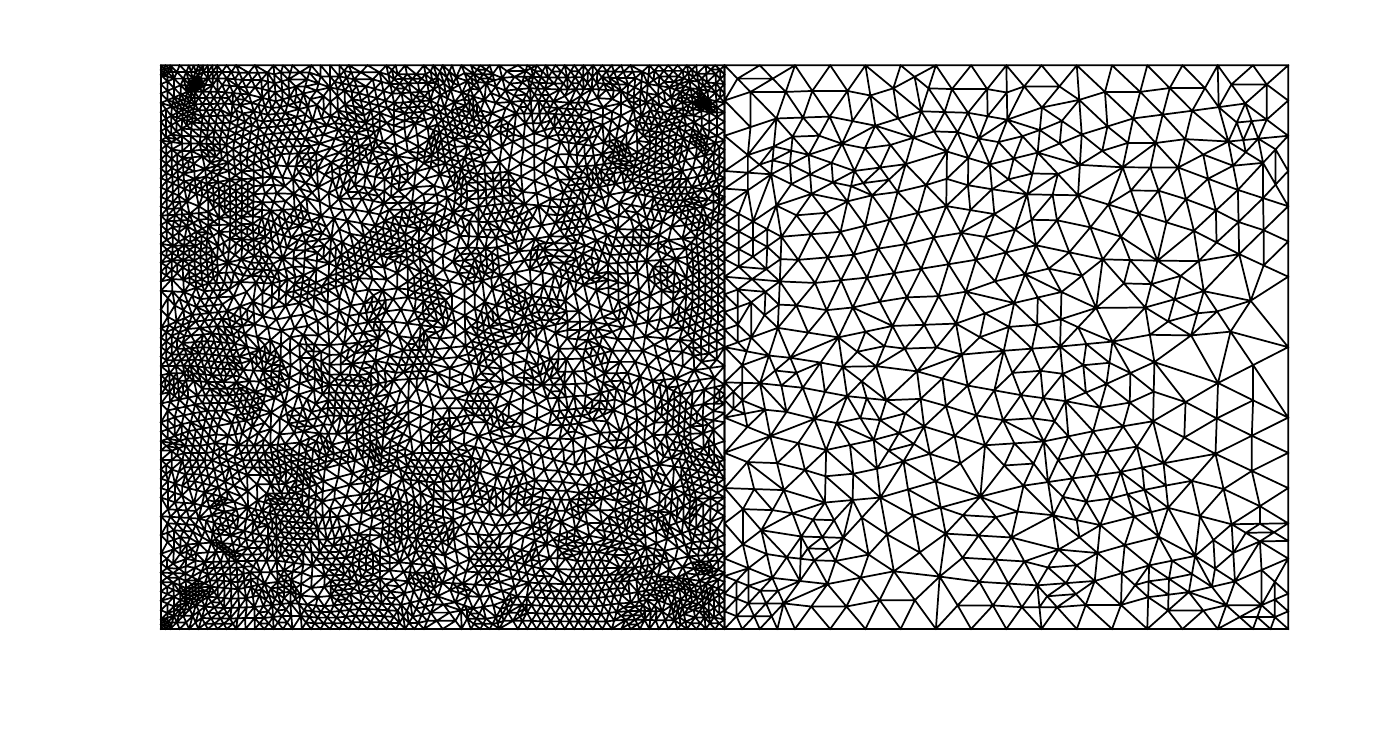}
    \caption{The sequence of adaptive meshes with $k_1=0.1$ and $k_2=10$.}
    \label{fig:3}
\end{figure}

\begin{figure}[h!]
    \includegraphics[width=0.32\textwidth]{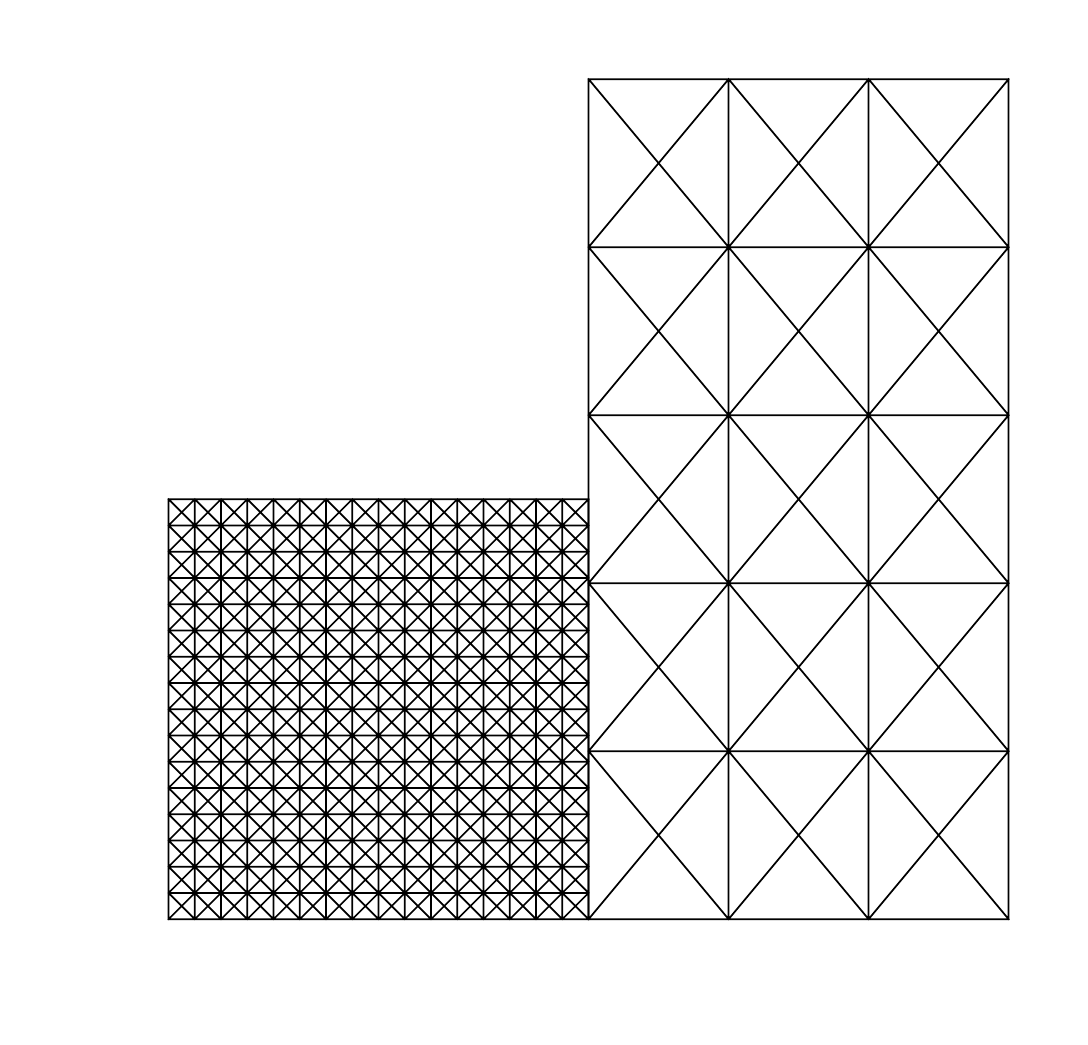}
    \includegraphics[width=0.32\textwidth]{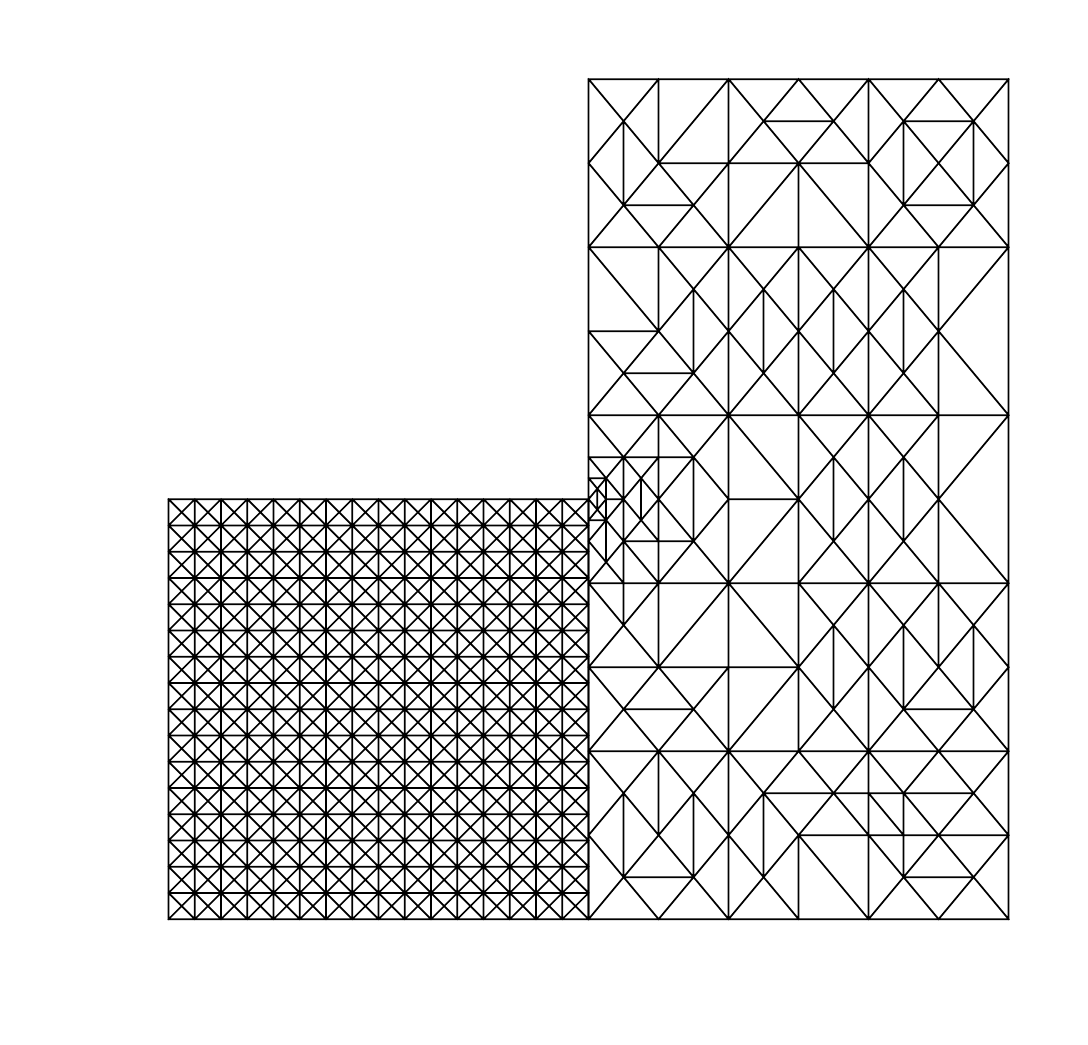}
    \includegraphics[width=0.32\textwidth]{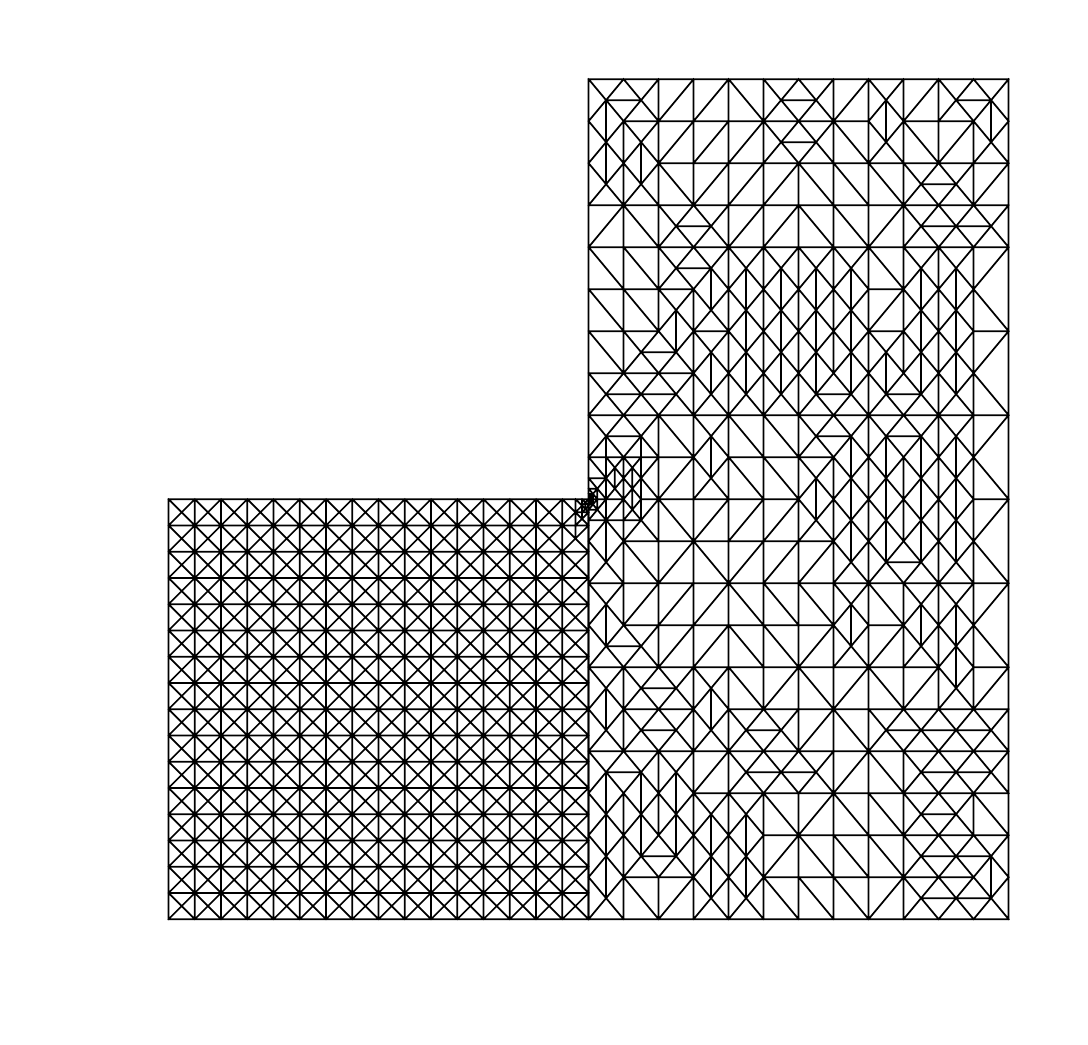}
    \includegraphics[width=0.32\textwidth]{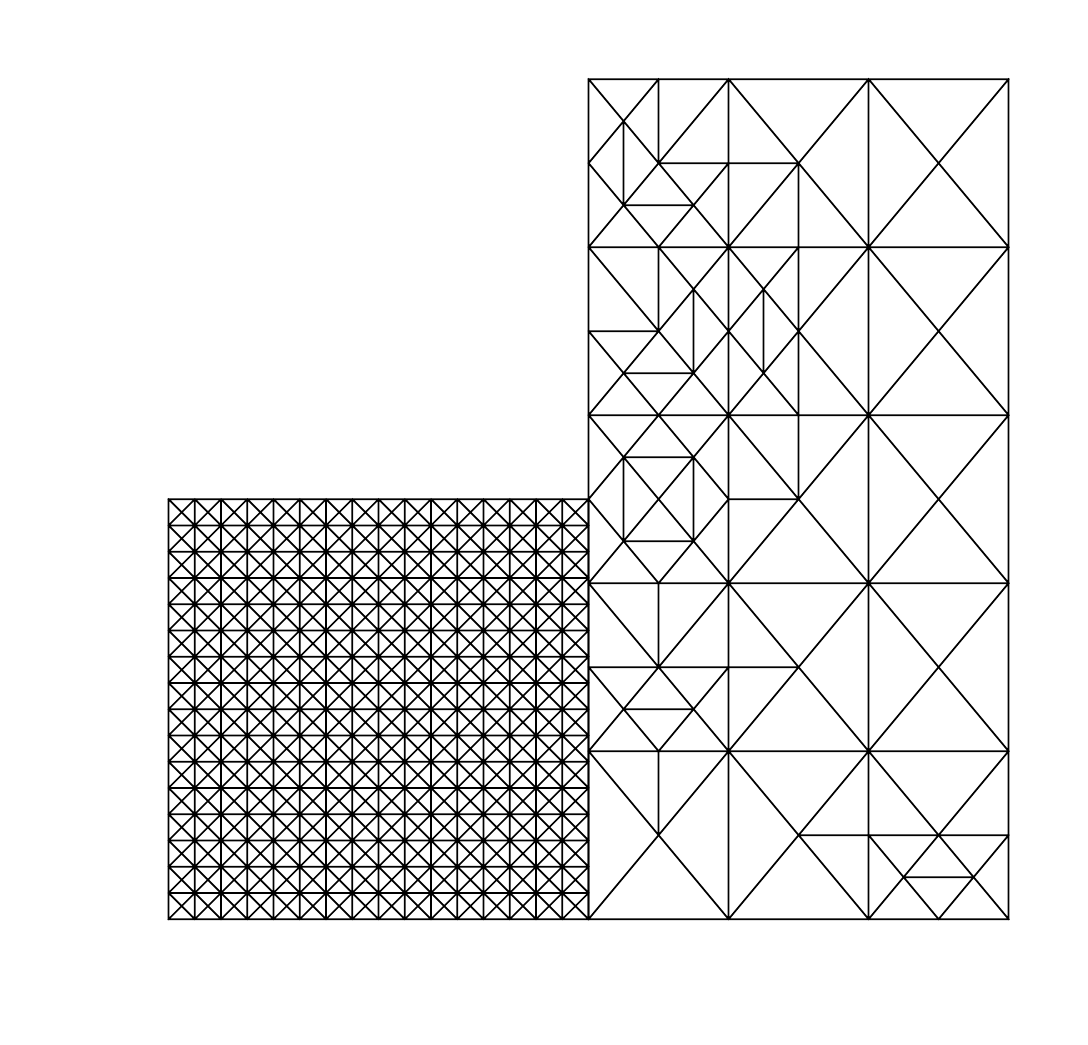}
    \includegraphics[width=0.32\textwidth]{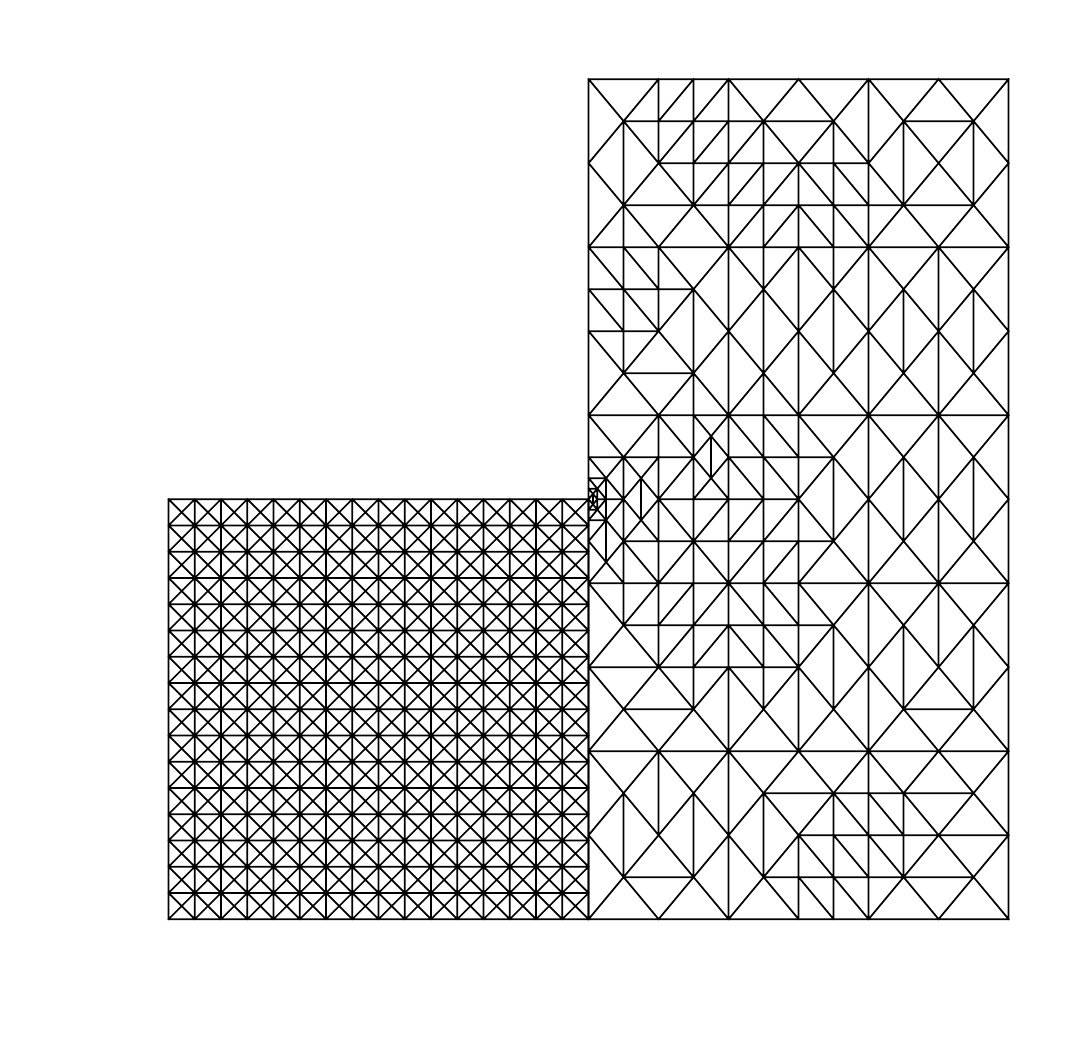}
    \includegraphics[width=0.32\textwidth]{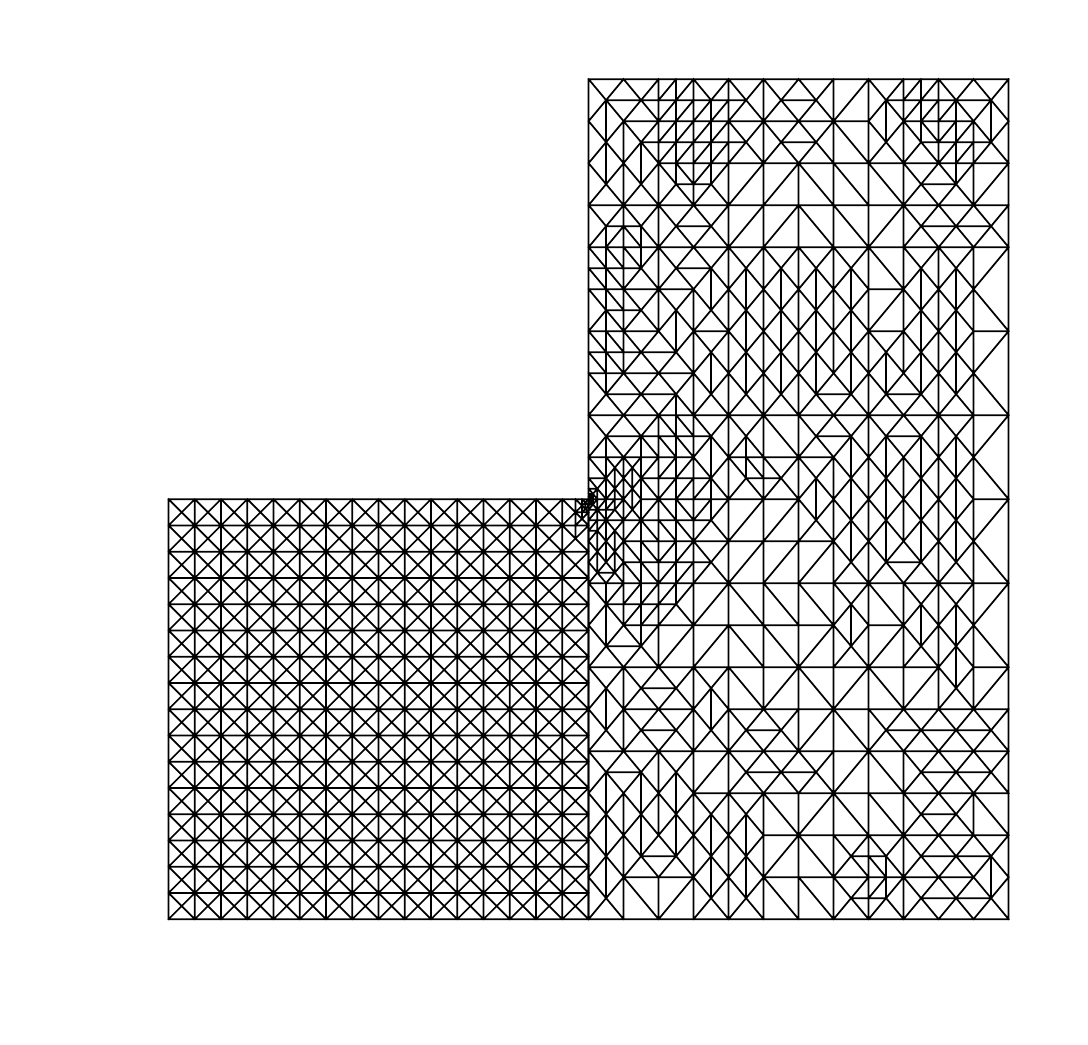}
    \includegraphics[width=0.32\textwidth]{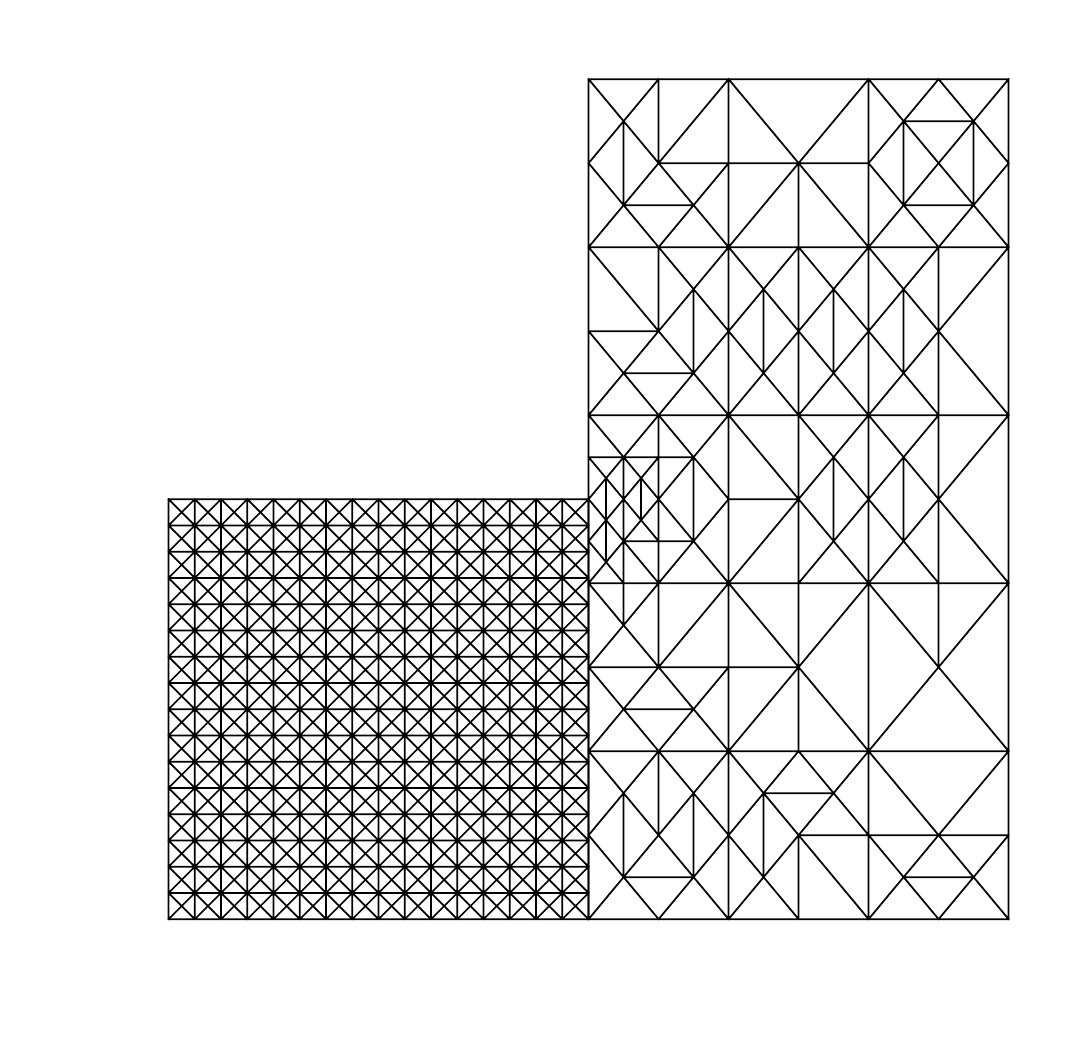}
    \includegraphics[width=0.32\textwidth]{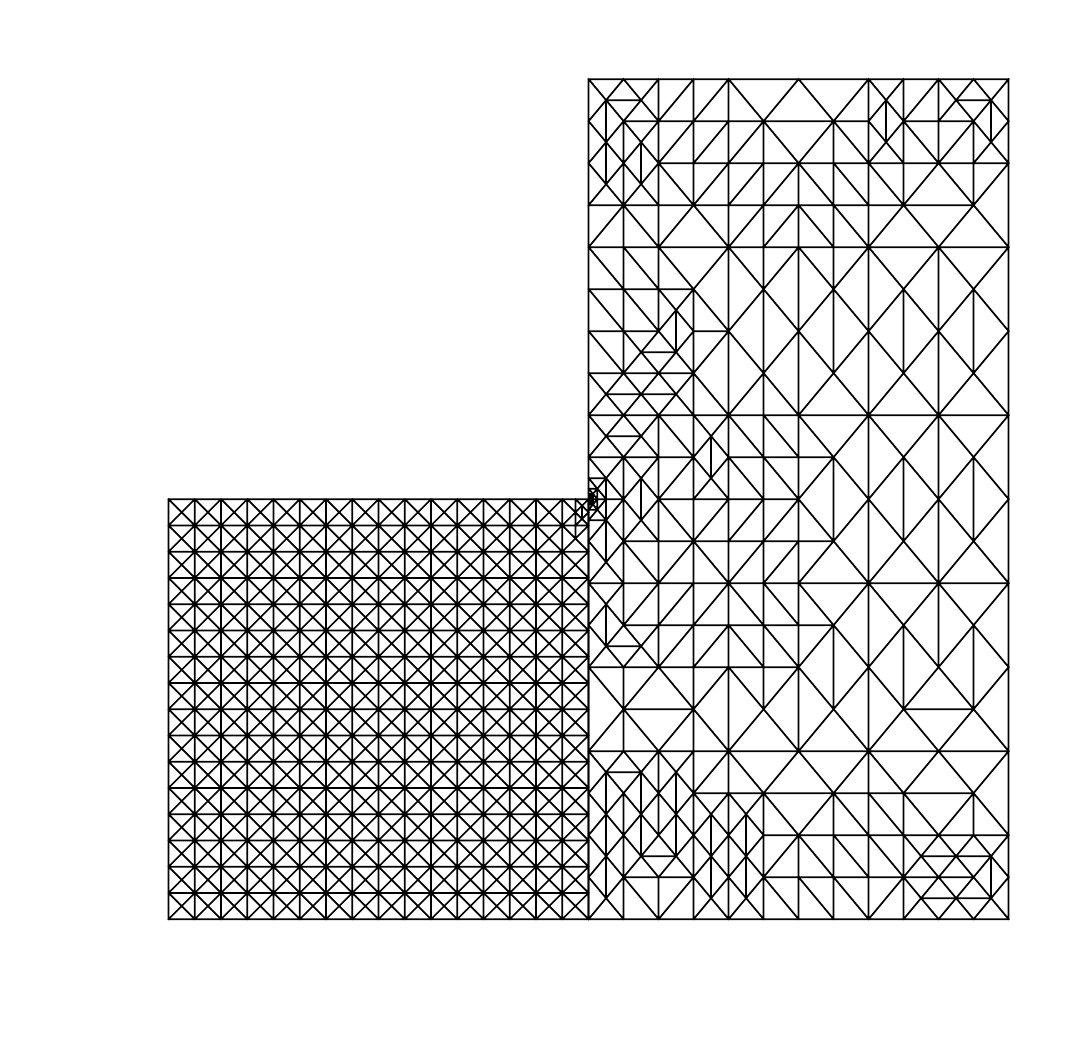}
    \includegraphics[width=0.32\textwidth]{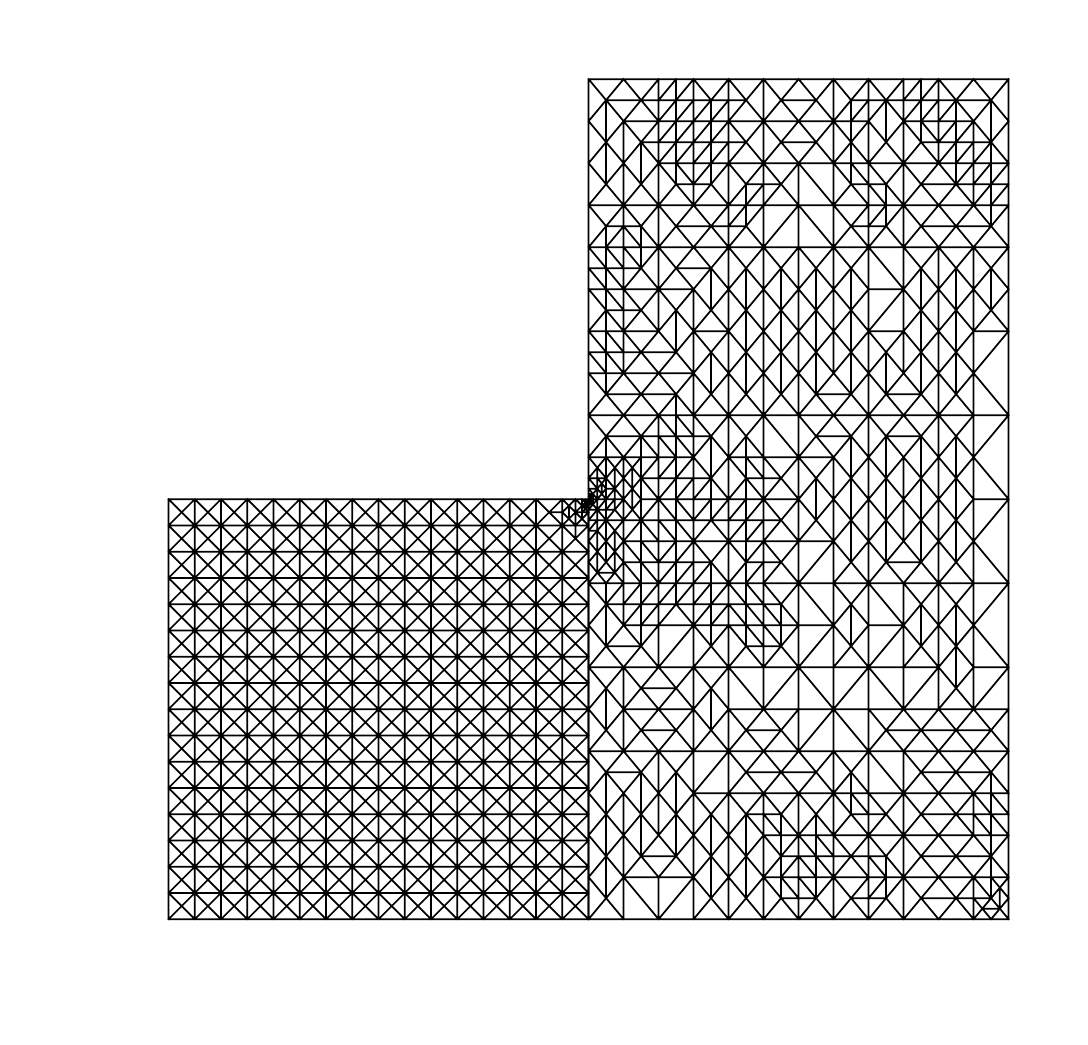}
    \caption{The sequence of adaptive meshes on an L-shaped domain with $k_1=k_2=1$.}
    \label{fig:lshaped}
\end{figure}

\pgfplotstableread{
    ndofs eta
    584 0.497465283416
    618 0.408851575761
    663 0.318526652926
    684 0.293186481151
    741 0.253878856143
    832 0.218005000682
    989 0.179752260047
    1098 0.162830141809
    1192 0.152787466758
    1441 0.135627816236
    1562 0.129325601261
    2080 0.112639934106
    2411 0.104685288917
    3270 0.0920626530931
    4343 0.0797769237903
    5668 0.0680911298602
    6064 0.0655813145573
    8153 0.0568470058191
    9357 0.0531890413469
}\adaptive

\pgfplotstableread{
    ndofs eta
    584 0.497465283416
    2250 0.297390007594
    8834 0.178152204521
}\uniform

\begin{figure}
    \centering
    \begin{tikzpicture}[scale=0.8]
        \begin{axis}[
                xmode = log,
                ymode = log,
                xlabel = {$N$},
                ylabel = {$\eta$},
                grid = both
            ]
            \addplot table[x=ndofs,y=eta] {\adaptive};
            \addplot table[x=ndofs,y=eta] {\uniform};
            \addplot+ [black, domain=2e3:5e3, mark=none] {exp(-0.5*ln(x) + ln(0.15) - (-0.5)*ln(2e3)))} node[right,pos=1.0]{$O(N^{-0.5})$};
            \addplot+ [black, domain=2e3:5e3, mark=none] {exp(-0.333*ln(x) + ln(0.4) - (-0.333)*ln(2e3)))} node[right,pos=1.0]{$O(N^{-0.33})$};
            \addlegendentry{Adaptive}
            \addlegendentry{Uniform}
        \end{axis}
    \end{tikzpicture}
    \caption{The global error estimator $\eta$ as a function of the number of degrees-of-freedom $N$ in the L-shaped domain case.}
    \label{fig:convergence}
\end{figure}
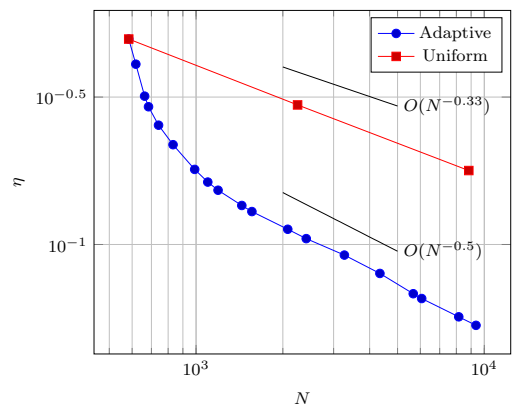

\bibliographystyle{siam}
\bibliography{nitsche_mortaring}

\end{document}